\newtheorem{theo}{Theorem}
\newtheorem{prop}[theo]{Proposition}
\newtheorem{lem}[theo]{Lemma}
\newtheorem{construct}[theo]{Construction}
\newtheorem{rem}[theo]{Remark}
\newcommand{\sm}{\setminus}
\newcommand{\eps}{\varepsilon}
\newcommand{\T}{{\mathcal T}}
\newcommand{\R}{\mathbb{R}}
\newcommand{\C}{{\mathcal C}}
\newcommand{\K}{{\mathcal K}}
\newcommand{\D}{{\mathcal D}}
\newcommand{\U}{{\mathcal U}}
\newcommand{\vb}{{\bf v}}
\newcommand{\xb}{{\bf x}}
\newcommand{\yb}{{\bf y}}
\newcommand{\1}{{\bf 1}}
\newcommand{\chicr}{\chi_{\rm cr}}
\newcommand*\patchAmsMathEnvironmentForLineno[1]{%
  \expandafter\let\csname old#1\expandafter\endcsname\csname #1\endcsname
  \expandafter\let\csname oldend#1\expandafter\endcsname\csname end#1\endcsname
  \renewenvironment{#1}%
     {\linenomath\csname old#1\endcsname}%
     {\csname oldend#1\endcsname\endlinenomath}}%
\newcommand*\patchBothAmsMathEnvironmentsForLineno[1]{%
 \patchAmsMathEnvironmentForLineno{#1}%
  \patchAmsMathEnvironmentForLineno{#1*}}%
\title{An asymptotic multipartite K\"uhn-Osthus theorem}%
\author{Ryan R. Martin\thanks{Iowa State University, 396 Carver Hall, 411 Morrill Road, Ames, Iowa, USA (\email{rymartin@iastate.edu}). This author's research was partially supported by National Science Foundation Grant DMS-0901008, National Security Agency grant H98230-13-1-0226 and Simons Foundation Grant \#353292. The research was partially done while the author was a long-term visitor at the Institute for Mathematics and its Applications (IMA) and would not have been possible without their generous support}%
\and
	Richard Mycroft\thanks{University of Birmingham, Edgbaston, Birmingham B15 2TT, England, UK (\email{r.mycroft@bham.ac.uk}). This author's research was partially supported by EPSRC grant EP/M011771/1}%
\and
	Jozef Skokan\thanks{London School of Economics and Political Science, Houghton Street, London WC2A 2AE, England, UK and Department of Mathematics, University of Illinois, 1409 W. Green Street, Urbana, Illinois, USA (\email{jozef@member.ams.org}) This author's research was partially supported by National Science Foundation Grant DMS-1500121}}
\begin{document}

\maketitle

\begin{abstract}
In this paper we prove an asymptotic multipartite version of a well-known theorem of K\"uhn and Osthus by establishing, for any graph $H$ with chromatic number $r$, the asymptotic multipartite minimum degree threshold which ensures that a large $r$-partite graph $G$ admits a perfect $H$-tiling. We also give the threshold for an $H$-tiling covering all but a linear number of vertices of $G$, in a multipartite analogue of results of Koml\'os and of Shokoufandeh and Zhao.
\end{abstract}

\begin{keywords}
	tiling, Hajnal-Szemer\'edi, K\"uhn-Osthus, multipartite, regularity, linear programming
\end{keywords}
\begin{AMS}
	05C35, 05C70
\end{AMS}

\thispagestyle{empty}
\section{Introduction} \label{sec:intro}

\subsection{Motivation}

Given graphs $G$ and $H$, a simple and natural question to ask is whether it is possible to perfectly tile $G$ with copies of~$H$, that is, to find vertex-disjoint copies of~$H$ in~$G$ which together cover every vertex of $G$. An obvious necessary condition for this is that $|V(H)|$ divides $|V(G)|$, which we assume implicitly throughout this discussion. In the case where $H$ consists of a single edge, a perfect $H$-tiling is simply a perfect matching; a classical theorem of Tutte~\cite{Tu47} gives a characterisation of all graphs for which this is possible, and Edmond's algorithm~\cite{E65} returns a perfect matching (or reports that no such matching exists) in polynomial time. However, if the graph $H$ has a~connected component with at least three vertices, then we see sharply different behaviour. In particular Hell and Kirkpatrick~\cite{KH} showed that, for any fixed graph $H$ of this form, the problem of determining whether a graph $G$ admits a perfect $H$-tiling is NP-hard, so it is unlikely that there exists a `nice' characterisation of such graphs analogous to Tutte's theorem.

Due to this, there has been much study of sufficient conditions which, for a fixed graph $H$, ensure the existence of a perfect $H$-tiling in a~graph~$G$ on~$n$ vertices (we refer the reader to the survey of K\"uhn and Osthus~\cite{KO} for a more detailed overview). The most natural of these are minimum degree conditions; to discuss these we define $\delta(H, n)$ to be the smallest integer $m$ such that any graph $G$ on $n$ vertices with $\delta(G) \geq m$ admits a perfect $H$-tiling. One early sufficient condition was given by the celebrated Hajn\'al-Szemer\'edi theorem~\cite{HSz}, which states that for any integer $r$ we have $\delta(K_r, n) = \tfrac{r-1}{r}n$ (the case $r=3$ was previously given by Corr\'adi and Hajnal~\cite{CH}). Turning to general graphs~$H$, Alon and Yuster~\cite{AY} later showed that $\delta(H, n) \leq \tfrac{\chi(H)-1}{\chi(H)}n + o(n)$; using the Blow-up Lemma Koml\'os, S\'ark\"ozy and Szemer\'edi  \cite{KSSz} then improved this result by replacing the $o(n)$ error term with an additive constant (which cannot be removed in general). In the other direction, Koml\'os \cite{K} introduced the \emph{critical chromatic number} $\chicr(H)$ of~$H$, and observed that for any~$H$ we have $\delta(H, n) \geq \tfrac{\chicr(H)-1}{\chicr(H)}n$. Finally K\"uhn and Osthus~\cite{KO2} completed our understanding by classifying graphs $H$ according to their \emph{greatest common divisor}, and showing that for any~$H$ we have either $\delta(H, n) = \tfrac{\chicr(H)-1}{\chicr(H)}n + O(1)$ or $\delta(H, n) = \tfrac{\chi(H)-1}{\chi(H)}n + O(1)$, according to the value of this parameter.

In parallel with the results described above, much attention has been devoted to the problem of perfectly tiling multipartite graphs; these have presented significant additional challenges. There is a natural multipartite notion of minimum degree: for a $r$-partite graph $G$ with vertex classes $V_1, \dots, V_r$ we define $\delta^*(G)$ to be the largest integer $s$ such that, for any $i \neq j$, any vertex of $V_i$ has at least $s$ neighbours in $V_j$. Similarly as before, let $\delta^*(H, n)$ denote the smallest integer $m$ such that any $\chi(H)$-partite graph $G$ whose $\chi(H)$ vertex classes each have size $n$ and which satisfies $\delta^*(G) \geq m$ admits a perfect $H$-tiling. Fischer~\cite{F} conjectured the natural multipartite analogue of the Hajnal-Szemer\'edi theorem, namely that $\delta^*(K_r, n) = \tfrac{r-1}{r}n$. Perhaps surprisingly, Catlin gave counterexamples demonstrating this natural conjecture to be false for each odd $r \geq 3$. However, for large $n$, Fischer's conjecture is `almost-true', in that Catlin's counterexamples are the only counterexamples to the conjecture, as shown by Keevash and Mycroft~\cite{KM} (this was previously demonstrated for $r=3$ and $r=4$ by Magyar and Martin~\cite{MM} and Martin and Szemer\'edi~\cite{MSz} respectively, whilst an asymptotic form for all $r$ was independently given by Lo and Markstr\"om~\cite{LM}). Subsequently Martin and Skokan~\cite{MS} continued this direction of research by establishing a multipartite analogue of the Alon-Yuster theorem, namely that for any $H$ we have $\delta^*(H, n) \leq \tfrac{\chi(H)-1}{\chi(H)}n + o(n)$. 

\begin{theo}[\cite{MS}]\label{theo:alonyuster}
Let $H$ be a graph on $h$ vertices with $\chi(H)=r\geq 3$. For any $\alpha > 0$ there exists $n_0 = n_0(\alpha,H)$ such that if $G$ is a balanced $r$-partite graph on $rn$ vertices with $\delta^*(G) \geq \left(\frac{r - 1}{r} + \alpha \right) n$, where $n \geq n_0$ is divisible by $h$, then $G$ admits a perfect $H$-tiling.
\end{theo}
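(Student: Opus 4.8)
The plan is to follow the regularity-method proof of the original Alon--Yuster theorem, adapted to the multipartite setting. Fix a constant hierarchy $1/n_0 \ll \eps \ll d \ll \alpha, 1/h$. First I would apply a multipartite version of Szemer\'edi's Regularity Lemma to $G$: each class $V_i$ is split into an exceptional set of size at most $\eps n$ together with $k$ clusters of a common size $m \approx n/k$, and we let $R$ be the reduced $r$-partite graph, with one vertex per cluster and two clusters adjacent exactly when the pair between them is $\eps$-regular of density at least $d$. A routine averaging argument (discarding the few clusters lying in too many irregular pairs) shows $\delta^*(R) \ge (\tfrac{r-1}{r}+\tfrac{\alpha}{2})k$. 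Since $\tfrac{r-1}{r}+\tfrac{\alpha}{2}$ comfortably exceeds the tiling threshold $\tfrac{r-1}{r}$, the asymptotic multipartite Hajnal--Szemer\'edi theorem (the asymptotic resolution of Fischer's conjecture discussed above) provides an almost-perfect $K_r$-tiling $\mathcal{M}$ of $R$, leaving only $o(k)$ clusters of each class uncovered; alternatively one can first produce a perfect fractional $K_r$-tiling of $R$ by a linear-programming/averaging argument and round it to such an $\mathcal{M}$.

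Each member of $\mathcal{M}$ is an $r$-tuple $(W_1,\dots,W_r)$ of clusters, one from each class, pairwise $\eps$-regular of density at least $d$. As $\chi(H)=r$, every proper $r$-colouring of $H$ embeds $H$ into a blow-up of $K_r$; writing the colour-class sizes as $x_1,\dots,x_r$ with $\sum_i x_i=h$, I would group copies of $H$ into \emph{balanced bundles} consisting of one copy of $H$ for each cyclic rotation of the colour classes among $W_1,\dots,W_r$, so that a single bundle uses exactly $h$ vertices from each of $W_1,\dots,W_r$. The Blow-up Lemma then packs $\lfloor m/h\rfloor$ disjoint bundles into each $(W_1,\dots,W_r)$, leaving fewer than $h$ vertices of each cluster uncovered. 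Doing this over all of $\mathcal{M}$ yields an $H$-tiling of $G$ missing only a small fraction of each class; and because only balanced bundles were used, the uncovered set $L=L_1\cup\dots\cup L_r$ (which also swallows the exceptional set and the clusters missed by $\mathcal{M}$) is \emph{balanced}, with $|L_1|=\dots=|L_r|\le\rho n$ and $h\mid|L_i|$, where $\rho=\rho(\eps,d)$ can be made as small as we like.

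The main difficulty is then to upgrade this near-perfect tiling to a genuinely perfect one by covering $L$. One should not try to tile $G[L]$ directly, since it may be edgeless; instead I would absorb the vertices of $L$ --- a few at a time, one from each class --- into fresh copies of $H$ whose remaining vertices come from the clusters of $\mathcal{M}$. This is where the full strength of $\delta^*(G)\ge(\tfrac{r-1}{r}+\alpha)n$ is used: a short counting argument shows that for every $v\in L_i$ at least a $\tfrac1r$-proportion of the members of $\mathcal{M}$ have all of their clusters in classes $j\ne i$ ``good'' for $v$ (each containing at least $dm$ neighbours of $v$), so the required copies of $H$ can be chosen greedily and pairwise disjointly, each removing only a bounded number of vertices from each of its clusters. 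Two bookkeeping constraints must be respected throughout: (i) the removals must be distributed so that the leftover of every tile $(W_1,\dots,W_r)$ stays balanced (handled by always drawing from a least-depleted usable member of $\mathcal{M}$); and (ii) the total number of vertices deleted from each cluster must be chosen so that the residual cluster sizes are multiples of $h$, hence exactly tileable by balanced bundles via the Blow-up Lemma. Both are arranged by elementary divisibility arithmetic, crucially using $h\mid n$. A final round of the Blow-up Lemma on the now perfectly-proportioned tiles produces the desired perfect $H$-tiling.

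I expect the interplay in (i)--(ii) --- between \emph{balancing across the $r$ classes} and the \emph{divisibility by $h$} forced by the colour-class sizes of $H$ --- to be the genuine obstacle. Unlike in the non-partite Alon--Yuster theorem, a single copy of $H$ typically uses different numbers of vertices from different classes, so neither the main tiling nor the clean-up can proceed copy by copy; one is forced to work with balanced bundles and to track the per-class deficits at every stage. (An alternative route would reserve, before applying regularity, a small balanced absorbing structure $A$ such that $G[A\cup L]$ has a perfect $H$-tiling for every sufficiently small balanced $L$, and then apply the argument above to $G-A$; but building $A$ needs its own counting argument and does not sidestep the balancing issue.)
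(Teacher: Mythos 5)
This paper does not actually prove Theorem~\ref{theo:alonyuster}; it quotes it from Martin and Skokan~\cite{MS} and uses it as a black box, so the fair comparison is with the machinery of~\cite{MS} and with the closely analogous proof of Theorem~\ref{theo:combined} in Section~\ref{sec:proof}. Your outer framework (multipartite regularity lemma, a $K_r$-tiling structure in the reduced graph $R$, balanced bundles of $r$ rotated copies of $H$, Blow-up Lemma at the end) is the right one. The problem is that the step you yourself flag as the crux --- your items (i) and (ii) --- is a genuine gap, and the devices you offer for it (``always draw from a least-depleted usable member of $\mathcal{M}$'' plus ``elementary divisibility arithmetic'') provably cannot handle it. Take $H=K_r$, or any $H$ all of whose $r$-colourings are equitable. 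A copy of $H$ that absorbs a leftover vertex $v\in L_i$ and takes its remaining vertices from the clusters of a tile $(W_1,\dots,W_r)$ necessarily removes one vertex fewer from $W_i$ than the corresponding colour class would, so every absorption unbalances its donor tile; and within a tile no later deletion of whole copies of $H$ can repair this, since all available colourings remove equal (or, when $\gcd(H)>1$, only $\gcd(H)$-congruent) amounts from the classes --- this is exactly the $\gcd$ obstruction that drives the present paper. Your counting argument only guarantees each leftover vertex about $k/r$ usable tiles, and the degree hypothesis permits the usable sets of all class-$1$ leftover vertices to lie in a set $A$ of roughly $k/r$ tiles while those of class $2$ lie in a disjoint set $B$ (possible for $r\geq 3$). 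Then the tiles of $A$ receive absorptions only from class-$1$ vertices and end up unbalanced no matter how the scheduling is done: no assignment-only scheme, least-depleted or otherwise, can keep every tile balanced, and $h\mid n$ gives only a global, not a per-tile, constraint.

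What is missing is a mechanism that transfers imbalance \emph{between} tiles --- copies of $H$ or $K_r$ spanning clusters of two different tiles, or reassignment of vertices between subclusters lying in the same vertex class --- together with an argument that such transfers are compatible with $\gcd(H)>1$, where sizes cannot be shifted by $\pm 1$. That is precisely where the real work of~\cite{MS} lies, and in the present paper the corresponding device is Step~6 of the proof of Theorem~\ref{theo:combined}, which uses Proposition~\ref{prop:meetingKrs} to find two copies of $K_r$ in $R$ sharing a cluster and the gadget $\U(H)$ of Proposition~\ref{defU} to shunt a surplus of $\pm 1$ along them; note that this gadget exists only because $\gcd(H)=1$ there, so for Theorem~\ref{theo:alonyuster} (arbitrary $\gcd$) a different repair is needed. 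Moreover, \cite{MS} and this paper sidestep your leftover-absorption problem at the source: instead of an almost-perfect integer $K_r$-tiling of $R$ plus absorption, they take a \emph{perfect fractional} $K_r$-tiling of $R$ (Lemma~\ref{fractiling}, here with $a=b$) and split each cluster into subclusters according to the weights, so no clusters are left uncovered and only the small exceptional set must be swallowed by greedily deleted copies of $H$ (Step~5), after which only the divisibility repair remains. Finally, a smaller but real flaw in the write-up: you pack the bundles into the tiles first and only then absorb $L$, yet the absorbing copies are supposed to draw their vertices from clusters that have already been exhausted; the steps must be reordered (set aside $L$, absorb, fix per-tile sizes and divisibility, and only then pack with the Blow-up Lemma), and the claim $h\mid|L_i|$ then has to be arranged at the adjustment stage rather than read off from the bundle count.
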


In this paper we prove an asymptotic multipartite analogue of the K\"uhn-Osthus theorem (Theorem~\ref{theo:degalpha}), which establishes the asymptotic value of $\delta^*(H, n)$ for any graph $H$ with $\chi(H) \geq 3$. Together with a~theorem of Bush and Zhao~\cite{BZ}, who previously gave the corresponding result for bipartite graphs $H$ up to an additive constant, this determines the asymptotic value of $\delta^*(H, n)$ for every graph $H$.

It is also natural to ask for the minimum degree condition needed to ensure that we can find an $H$-tiling in $G$ covering almost all the vertices of $G$. In the non-partite setting Koml\'os~\cite{K} showed that $\delta(G) \geq \tfrac{\chicr(H)-1}{\chicr(H)}n$ is sufficient to ensure an $H$-tiling covering all but $o(n)$ vertices. He conjectured that in fact this condition guarantees an $H$-tiling covering all but a constant number of vertices, and this was subsequently confirmed by Shokoufandeh and Zhao~\cite{SZ}. Our second main result (Theorem~\ref{theo:almosttiling}) gives a multipartite analogue of Koml\'os's result, namely that any $\chi(H)$-partite graph $G$ whose vertex classes each have size $n$ and which satisfies $\delta^*(G) \geq \tfrac{\chicr(H)-1}{\chicr(H)}n$ admits an $H$-tiling covering all but $o(n)$ vertices of $G$. Again, an analogous result for bipartite graphs $H$ was previously given by Bush and Zhao~\cite{BZ}.

\subsection{Main results}

Let $G$ and $H$ be graphs. An \emph{$H$-tiling} in $G$ is a collection of vertex-disjoint copies of $H$ in $G$; it is \emph{perfect} if every vertex of $G$ is covered by some member of the tiling. Let $H$ be a graph with chromatic number $\chi(H) = r$, and let $\C$ denote the set of proper $r$-colourings of $H$. Then for any proper $r$-coloring $\phi \in \C$ with colour classes $X_1^\phi, \dots, X_r^\phi$, we define
\begin{equation*}
   \D(\phi) := \big\{|X_i^\phi| - |X^\phi_j| : i, j \in [r]\big\} \mbox{ and } \D(H) := \bigcup_{\phi \in \C} \D(\phi) .
\end{equation*}
(Throughout this paper we write $[r]$ to denote the set $\{1, \dots, r\}$.)
The \emph{greatest common divisor} of $H$, denoted $\gcd(H)$, is then defined to be the highest common factor of the set $\D(H)$ if $\D(H)\neq 0$. If $\D(H)=0$ (that is, if every $r$-coloring of $H$ is \emph{equitable}, meaning that all colour classes have the same size) then we write $\gcd(H) = \infty$. We also define
\begin{equation} \label{eq:defsigma}
   \sigma(H) := \min_{\phi \in \C, i \in [r]} \frac{|X_i^\phi|}{|V(H)|}.
\end{equation}
So $0 < \sigma(H) \leq 1/r$, with equality if and only if every $r$-colouring of $H$ is equitable.
The \emph{critical chromatic number} of $H$, introduced by Koml\'os~\cite{K}, is denoted $\chicr(H)$ and is defined by
\begin{equation} \label{eq:defchicr}
  \chicr(H) := \frac{\chi(H)-1}{1-\sigma(H)}.
\end{equation}
So for any graph $H$ we have $\chi(H)-1 < \chicr(H) \leq \chi(H)$, again with equality if and only if every $\chi(H)$-coloring of $H$ is equitable. Note that the definition of $\sigma(H)$ that we use differs by a factor of $|V(H)|$ from that used by K\"uhn and Osthus~\cite{KO2}, but our definition of $\chicr(H)$ is the same.
Finally, following K\"uhn and Osthus~\cite{KO2}, we define
\begin{align*}
   \chi^*(H) := \begin{cases} \chicr(H) & \mbox{if $\gcd(H) = 1$,} \\
	                            \chi(H) & \mbox{otherwise.} \end{cases}
\end{align*}

Recall that if $G$ is an $r$-partite graph with vertex classes $V_1, \dots, V_r$, then the multipartite minimum degree of $G$, denoted $\delta^*(G)$, is defined to be the largest integer $m$ such that for any $i \neq j$ every vertex of $V_i$ has at least $m$ neighbours in $V_j$. Also, we say that $G$ is \emph{balanced} if every vertex class has the same size.

Our first main result is Theorem~\ref{theo:degalpha} in which the optimal degree condition is relaxed by an additive factor which is linear in the number of vertices. This is an asymptotic multipartite version of a theorem of K\"uhn and Osthus~\cite{KO2}.

\begin{theo}\label{theo:degalpha}
Let $H$ be a graph on $h$ vertices with $\chi(H)=r\geq 3$. For any $\alpha > 0$ there exists $n_0=n_0(\alpha,H)$ such that if $G$ is a balanced $r$-partite graph $G$ on $rn$ vertices with $\delta^*(G) \geq \left(1-\frac{1}{\chi^*(H)} + \alpha \right) n$, where $n \geq n_0$ is divisible by $h$, then $G$ contains a perfect $H$-tiling.
\end{theo}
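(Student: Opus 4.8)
The plan begins with a reduction to the case $\gcd(H)=1$. If $\gcd(H)\neq 1$ then $\chi^*(H)=\chi(H)=r$, so the hypothesis of Theorem~\ref{theo:degalpha} coincides with that of Theorem~\ref{theo:alonyuster} and a perfect $H$-tiling already follows. Hence from now on I assume $\gcd(H)=1$, so that $\chi^*(H)=\chicr(H)$ and the degree hypothesis reads $\delta^*(G)\ge(1-1/\chicr(H)+\alpha)n$; since $\chicr(H)<\chi(H)$ this is strictly weaker than the Alon--Yuster-type bound, and it is here that the new work lies.

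Next I would fix a constant hierarchy $\eps\ll d\ll\beta\ll\alpha,1/h$ and begin by setting aside a small \emph{absorbing structure}: a vertex set $A$ with $|A|\le\beta n$, suitably spread across the vertex classes, such that $G[A\cup U]$ has a perfect $H$-tiling whenever $U$ is a sufficiently small and suitably balanced set of vertices disjoint from $A$. Building such an $A$ (from the degree condition, together with the divisibility $h\mid n$) is a self-contained lemma. Since deleting $A$ costs at most $\beta n$ in the minimum degree, I would then apply the multipartite regularity lemma to $G-A$: this partitions each class into $k$ clusters of a common size $L$ plus an exceptional set of size $O(\eps n)$, and produces the \emph{reduced graph} $R$, a balanced $r$-partite graph on vertex classes of size $k$ with an edge for each $\eps$-regular pair of density at least $d$. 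A routine averaging argument gives $\delta^*(R)\ge(1-1/\chicr(H)+\alpha/2)k$.

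The heart of the proof is to show that $R$ admits a \emph{perfect fractional $H$-tiling}: a family $\mathcal F$ of copies of $K_r$ in $R$, each carrying a proper $r$-colouring of $H$, a bijection from its $r$ clusters to the colour classes of that colouring, and a nonnegative rational weight, such that at every cluster $C$ the sum over the copies through $C$ of (weight)$\,\times\,$(size of the colour class assigned to $C$) is the same. Following Koml\'os, one encodes this as a linear program; the fact that $H$ has a colour class of relative size $\sigma(H)<1/r$ is precisely what makes the relevant threshold $1-1/\chicr(H)$ rather than $1-1/\chi(H)$, and the degree bound on $R$ forces an \emph{almost}-perfect fractional $H$-tiling (this underlies the multipartite analogue of Koml\'os's theorem). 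One then upgrades ``almost perfect'' to ``perfect'' using $\gcd(H)=1$: as K\"uhn and Osthus observed, this condition is exactly what makes available the ``transferrals'' needed to redistribute a unit of load between the clusters of $R$ by locally rechoosing which colour classes of $H$ are assigned to the copies in $\mathcal F$ (the underlying arithmetic being that integer combinations of differences of the colour-class-size vectors of $H$ then realise every shift in $\{v\in\Z^r:\sum_i v_i=0\}$), and the degree condition ensures $R$ is connected enough for these transferrals to be applied between any pair of clusters; combining them corrects the $o(1)$ imbalances and yields an exactly perfect fractional $H$-tiling. I expect this step --- the LP analysis and the transferral argument, both carried out in the $r$-partite setting where the relevant polytope and lattice are considerably more delicate than classically --- to be the main obstacle, just as the analogous structural step was for Keevash--Mycroft in the $K_r$ case.

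Finally I would convert the perfect fractional $H$-tiling of $R$ into a tiling of almost all of $G-A$. As the weights are rationals of bounded denominator and $L$ is large, rounding splits all but $O(1)$ vertices of each cluster into pieces, each reserved for one copy $K\in\mathcal F$ and one colour class of its colouring, with the $r$ pieces of a given copy in exactly the size ratio of the corresponding colour classes. For each $K$, after discarding a few atypical vertices the $r$ associated pieces induce a super-regular $r$-partite graph of the right proportions, so the Blow-up Lemma gives a perfect $H$-tiling of their union. Doing this over all of $\mathcal F$ tiles $G-A$ except for a leftover $U$ of size $O(\eps n)$ --- the exceptional set, the rounding remainders and the discarded atypical vertices --- and by arranging the rounding so that $U$ is suitably balanced (here $h\mid n$ is used, to ensure $|U|$ is a multiple of $h$ and that no global obstruction survives), the absorbing structure $A$ swallows $U$ through a perfect $H$-tiling of $G[A\cup U]$. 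Combined with the tiling of $(G-A)\setminus U$, this is a perfect $H$-tiling of $G$.
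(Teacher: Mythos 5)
Your reduction to the case $\gcd(H)=1$ via Theorem~\ref{theo:alonyuster} matches the paper, and your regularity/fractional-tiling/blow-up skeleton is broadly in the right family, but the plan has a genuine gap at its load-bearing point: the absorbing structure $A$. You assert as ``a self-contained lemma'' that one can reserve $A$ with $|A|\le\beta n$ such that $G[A\cup U]$ has a perfect $H$-tiling for every small, ``suitably balanced'' leftover $U$. At the threshold $1-1/\chicr(H)+\alpha$ this is not a routine statement: building $H$-factor absorbers requires a reachability/lattice analysis (what ``suitably balanced'' even means here must be specified in terms of the possible distributions of $U$ across the $r$ classes and the relevant congruences), a proof that exponentially/linearly many absorbing gadgets exist for each configuration at this degree level, and a proof that the rounding leftover can actually be steered into the admissible set of configurations. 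None of this is sketched, and it is comparable in difficulty to the rest of the argument; the paper avoids absorption altogether, covering the exceptional vertices greedily with copies of $H$ (Proposition~\ref{deletevertex}, which only needs degree $\tfrac{r-2}{r-1}n+\alpha n<(1-1/\chicr(H)+\alpha)n$) and repairing divisibility by deleting copies of the $H$-tileable graph $\U(H)$ of Proposition~\ref{defU} across overlapping copies of $K_r$ in the reduced graph (Proposition~\ref{prop:meetingKrs}).

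Two further points would need repair even granting an absorber. First, your fractional step misplaces where $\gcd(H)=1$ enters: the degree bound on $R$ yields a \emph{perfect} fractional weighted $K_r$-tiling directly by LP duality (Farkas' Lemma, as in Lemma~\ref{fractiling}), with no almost-perfect-plus-transferral stage and no gcd hypothesis; the gcd is needed only for the \emph{integer} divisibility of the subcluster sizes, which your plan handles only implicitly by dumping remainders into $U$ -- again loading everything onto the unproven absorber. Second, in the final step you take the pieces of each copy $K$ ``in exactly the size ratio of the colour classes'' and invoke the Blow-up Lemma; but after discarding atypical vertices and rounding, the sizes are only approximately in ratio, and a complete $r$-partite graph with nearly-proportional classes need not have a perfect $H$-tiling without both a divisibility condition and genuine slack in the smallest class. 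This is exactly why the paper perturbs the weights to $a'=a+\alpha h/2$, $b'=b-\tfrac{\alpha h}{2(r-1)}$ so that the smallest subcluster has proportion at least $\sigma(H)+\alpha/3$, making Proposition~\ref{completetiling} (which requires $\sigma(G')\ge\sigma(H)+\alpha$ and class sizes divisible by $rh\cdot\gcd(H)$) applicable robustly; your exact-ratio plan has no such slack, so you would need to re-balance every piece exactly, which your proposal does not explain how to do.
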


\begin{rem}
In the case where $\gcd(H) = 1$, the proof of Theorem~\ref{theo:degalpha} only uses the weaker assumption that $h$ divides $rn$ (rather than $h$ divides $n$). However, as observed in~\cite{MS}, in the case $\gcd(H) > 1$ we do indeed require that $h$ divides $n$.
\end{rem}

Constructions given in Section~\ref{sec:construct} demonstrate that, for any graph~$H$, Theorem~\ref{theo:degalpha} is best-possible up to the $\alpha n$ error term in the degree condition.
A similar but slightly-different result holds in the case $r=2$; this case was fully settled by Bush and Zhao~\cite{BZ} up to an additive constant. 
\begin{theo}[Bush-Zhao~\cite{BZ}] \label{BZ:degalpha}
For any bipartite graph $H$ there exist constants $n_0=n_0(H)$ and $c=c(H)$ such that if $G$ is a balanced bipartite graph with $n\geq n_0$ vertices in each part, and $|H|$ divides $2n$, then the following statements hold (where $\gcd(H)$ is defined as above and ${\rm gcd}_{\mathrm{cc}}(H)$ is the greatest common divisor of the sizes of the connected components of $H$).
\begin{enumerate}[(a)]
   \item If $\gcd(H)>1$ and ${\rm gcd}_{\mathrm{cc}}(H)=1$, then $\delta(G)\geq (1-1/\chi(H))n+c$  suffices to ensure a perfect $H$-tiling in $G$.
   \item If $\gcd(H)=1$ or ${\rm gcd}_{\mathrm{cc}}(H)>1$, then $\delta(G)\geq (1-1/\chicr(H))n+c$ suffices to ensure a perfect $H$-tiling in $G$.
\end{enumerate}
\end{theo}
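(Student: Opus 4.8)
The plan is to split according to the value of $\gcd(H)$. If $\gcd(H)>1$ then $\chi^*(H)=\chi(H)=r$, so the hypothesis reads $\delta^*(G)\geq(\tfrac{r-1}{r}+\alpha)n$ and the conclusion is immediate from the multipartite Alon-Yuster theorem (Theorem~\ref{theo:alonyuster}); the spare $\alpha n$ of degree keeps us clear of extremal configurations such as Catlin's, so nothing more is needed here. So assume from now on that $\gcd(H)=1$, whence $\chi^*(H)=\chicr(H)$ and the task is to beat the $(r-1)/r$ threshold by exploiting the non-equitable proper $r$-colourings of $H$. I would use the regularity-plus-absorption method: (i) set aside a small absorbing structure; (ii) apply the multipartite regularity lemma to the rest of $G$, producing a bounded balanced $r$-partite reduced graph $R$ with $\delta^*(R)\geq(1-1/\chicr(H)+\alpha/2)t$; (iii) solve a fractional $H$-tiling problem in $R$ using linear programming; (iv) transfer the solution back via the blow-up lemma to obtain an $H$-tiling of $G$ missing only few vertices; (v) rebalance and absorb what is left.

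For (i), a random-greedy selection, using the $\alpha n$ of extra degree, should produce a small $H$-tiling $\mathcal{A}$ --- covering $\gamma n$ vertices of each class for some $\gamma \ll \alpha$ --- whose vertex set $A$ is \emph{absorbing}: for every sufficiently small set $U$ disjoint from $A$ with $|U\cap V_1|=\dots=|U\cap V_r|$, the graph $G[A\cup U]$ has a perfect $H$-tiling. It is here, and again in (v), that the hypothesis $\gcd(H)=1$ (equivalently, that $\D(H)$ generates $\Z$) is used: it provides the flexibility to combine copies of $H$ in different colourings so as to compensate for a range of leftover profiles. Step (iii) is the heart of the matter. I would show that $R$ admits a perfect \emph{fractional $H$-tiling}, i.e.\ a nonnegative weighting of pairs $(K,\phi)$ with $K$ a copy of $K_r$ in $R$ and $\phi$ a proper $r$-colouring of $H$, such that the weight received by each cluster, counted with the multiplicities $|X_i^\phi|$, equals that cluster's size. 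The point is that $1-1/\chicr(H)=(r-2+\sigma(H))/(r-1)$ is exactly the multipartite minimum-degree ratio at which such a tiling becomes feasible: with this much degree one can fill every cluster exactly by mixing copies of $H$ in their various unbalanced colourings, whereas below it an independent set of size slightly above $n/\chicr(H)$ in some class --- the construction of Section~\ref{sec:construct} --- obstructs it. I would prove feasibility by writing down the covering linear program, passing to its dual, and checking that the minimum-degree hypothesis forces the dual optimum to be at least $1$. One then rounds a basic optimal solution to integer multiplicities (using $\gcd(H)=1$ to fold the bounded rounding error into a balanced correction), cleans each used copy of $K_r$ into a super-regular $r$-partite configuration, and applies the blow-up lemma to embed the prescribed copies of $H$; a bounded number of further copies of $H$ then rebalances the uncovered set, and $\mathcal{A}$ absorbs the remainder.

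The step I expect to be the main obstacle is (iii), and within it the need to control the \emph{balancedness} of the coverage. In the non-partite setting Koml\'os's theorem already produces complete-multipartite pieces whose part sizes realise the colour profiles of $H$, but in the $r$-partite setting one must in addition ensure that the coverage is distributed equally among the $r$ vertex classes and that no multipartite analogue of Catlin's obstruction appears at this lower threshold --- and it is precisely this that the linear-programming duality is there to rule out. Making the LP mesh correctly with both the balancedness constraint and the $\gcd(H)=1$ rounding, and designing an absorbing structure in the multipartite setting that is robust to unbalanced leftover profiles, is where the real work lies.
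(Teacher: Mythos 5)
You have proved (in outline) a different statement from the one asked. Theorem~\ref{BZ:degalpha} is the Bush--Zhao result for \emph{bipartite} $H$, i.e.\ $\chi(H)=2$: the degree condition carries only an additive constant $c=c(H)$, and the dichotomy depends not just on $\gcd(H)$ but also on ${\rm gcd}_{\mathrm{cc}}(H)$, the greatest common divisor of the component sizes of $H$. (The paper does not prove this theorem at all; it is quoted from~\cite{BZ}.) Your argument is instead a sketch of the paper's Theorem~\ref{theo:degalpha}: you set $r=\chi(H)\geq 3$, allow yourself a linear slack $\alpha n$ in the degree, and dispose of the $\gcd(H)>1$ case by citing Theorem~\ref{theo:alonyuster}, which is stated only for $r\geq 3$. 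None of that machinery applies in the bipartite setting you were asked about.

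Two concrete failures follow from this mismatch. First, your case split on $\gcd(H)$ alone is the wrong dichotomy for $r=2$: in Bush--Zhao, if ${\rm gcd}_{\mathrm{cc}}(H)>1$ then the $\chicr(H)$-threshold suffices \emph{even when} $\gcd(H)>1$, and if $\gcd(H)>1$ with ${\rm gcd}_{\mathrm{cc}}(H)=1$ it does not; your plan never looks at the component structure of $H$, so it cannot reproduce either direction. The reason components matter is exactly the remark after Proposition~\ref{prop:meetingKrs}: a balanced bipartite graph with minimum degree around $n/2$ need not be connected, so the local swapping of copies of $K_r$ (and hence of divisibility defects) that underpins Step~6 of the paper's proof, and implicitly your rebalancing/absorption steps, is unavailable when $r=2$; whether copies of $H$ can be split across components of $G$ is precisely what ${\rm gcd}_{\mathrm{cc}}(H)$ controls. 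Second, the theorem requires only an additive constant $c$ in the degree bound, while your regularity-plus-absorption scheme explicitly spends a linear surplus (``the spare $\alpha n$ of degree keeps us clear of extremal configurations'', and again in building the absorber); at best it would give an asymptotic $+\alpha n$ version. Obtaining the constant-error statement needs a stability analysis of near-extremal configurations, as carried out in~\cite{BZ} (and in~\cite{KO2,SZ} in the non-partite setting), which is entirely absent from your outline.
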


The necessity of considering ${\rm gcd}_{\mathrm{cc}}(H)$ is unique to the case of $r=2$, which we discuss in Section~\ref{sec:EditingClusterSizes}, after the proof of Proposition~\ref{prop:meetingKrs}. 

We can also consider almost-perfect $H$-tilings, that is, $H$-tilings covering almost all of the vertices of $G$. In the non-partite case the minimum degree condition needed to ensure a tiling covering all but a~linear number of vertices was established by Koml\'os~\cite{K}; this result was later strengthened by Shokoufandeh and Zhao~\cite{SZ} to tilings covering all but a constant number of vertices. Our next theorem provides a~multipartite analogue of the result of Koml\'os.

\begin{theo} \label{theo:almosttiling}
Let $H$ be a graph with $\chi(H) = r \geq 3$. For any $\psi > 0$ there exists $n_0=n_0(\psi,H)$ such that for any $n \geq n_0$, if $G$ is a~balanced $r$-partite graph on $rn$ vertices with $\delta^*(G) \geq \left(1-\frac{1}{\chicr(H)} \right) n$, then $G$ contains an $H$-tiling covering all but at most $\psi n$ vertices of~$G$.
\end{theo}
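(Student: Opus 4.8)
The plan is to reduce the almost-perfect tiling problem to the perfect tiling problem of Theorem~\ref{theo:degalpha} by a standard absorbing-free ``throw away a few vertices'' argument, combined with the regularity method. First I would apply the Szemer\'edi Regularity Lemma (in its multipartite form, respecting the partition $V_1, \dots, V_r$ of $G$) to obtain a regular partition, and pass to the reduced graph $R$. The key point is that the reduced multipartite graph inherits, up to a small error, the multipartite minimum degree condition: $\delta^*(R) \geq \left(1 - \tfrac{1}{\chicr(H)} - \eps\right) |R_i|$ where $R_1, \dots, R_r$ are the vertex classes of $R$. Now I cannot directly apply Theorem~\ref{theo:degalpha} to $R$ because the degree threshold there is $1 - 1/\chi^*(H) + \alpha$, and when $\gcd(H) = 1$ we have $\chi^*(H) = \chicr(H)$, which is fine, but when $\gcd(H) > 1$ we have $\chi^*(H) = \chi(H) > \chicr(H)$, so the hypothesis of Theorem~\ref{theo:degalpha} is \emph{not} met. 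This is exactly why only an \emph{almost}-perfect tiling can be guaranteed in general.

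To handle this, the plan is to find in $R$ a \emph{fractional} $H$-tiling, or rather a near-perfect integer tiling, of the reduced graph after a small perturbation. The cleanest route: it suffices to find in $R$ a collection of vertex-disjoint copies of $K_r$ (equivalently, a $K_r$-tiling of $R$) covering all but a small proportion of the vertices of each class. Indeed, with $\delta^*(R) \geq (1 - 1/\chicr(H) - \eps)|R_i|$ and the fact that $\chicr(H) > \chi(H) - 1 = r - 1$, the multipartite minimum degree exceeds $(1 - 1/(r-1) - \eps')|R_i|$ only when $r \geq 3$ gives something useful; more carefully one uses that $\delta^*(R)$ is large enough to find an \emph{almost}-spanning $K_r$-tiling of $R$ by the multipartite Hajnal--Szemer\'edi-type results (Keevash--Mycroft~\cite{KM}, or the weaker asymptotic forms), since $1 - 1/\chicr(H) \geq 1 - 1/(r-1) = (r-2)/(r-1)$ is false in general — so instead I would directly build the $K_r$-tiling greedily or via a fractional relaxation and the linear-programming duality alluded to in the abstract. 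Each cluster $K_r$ in $R$, being a regular $r$-partite $r$-tuple of clusters of equal size $m$, can then be tiled internally by copies of $H$ using the Blow-up Lemma: split each cluster into the colour-class proportions $\sigma_1, \dots, \sigma_r$ of an optimal (non-equitable, if $\gcd(H)=1$) colouring $\phi$ of $H$, and since $\chicr(H) = (r-1)/(1-\sigma(H))$ was chosen precisely so that the densest colour class has relative size $1 - (r-1)/\chicr(H)$, the degree condition $\delta^*(G) \geq (1 - 1/\chicr(H))n$ makes each such $r$-tuple of sub-clusters super-regular with the right densities, so the Blow-up Lemma yields a perfect $H$-tiling inside each $K_r$-cell. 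The leftover vertices (those not covered by the $K_r$-tiling of $R$, plus the exceptional set from regularity, plus rounding errors in the cluster splits) number at most $\psi n$ provided $\eps$, and hence the regularity parameters, are chosen small enough at the start.

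The main obstacle I expect is establishing that the reduced graph $R$ admits a $K_r$-tiling covering all but a $o(1)$-fraction of each class under only the hypothesis $\delta^*(R) \geq (1 - 1/\chicr(H) - \eps)m$; this is weaker than the Keevash--Mycroft threshold, so one cannot just quote it. The resolution should be a fractional/LP argument: set up the linear program whose variables are weights on the copies of $K_r$ (or on ``colour-weighted'' cliques corresponding to the colouring $\phi$) in $R$, show via LP duality that the extremal obstruction to a good fractional $K_r$-tiling would force a subset of one class with too few neighbours, contradicting the degree bound, and then round the fractional tiling to an integer one losing only $o(m)$ vertices per class. Care is needed to interlock the two scales — the proportions used to split clusters must be the \emph{same} optimal colouring weights used in the fractional tiling of $R$ — and to verify the divisibility-free nature of the argument (no $\gcd$ obstruction arises precisely because we allow $\psi n$ uncovered vertices, absorbing any parity/divisibility defect). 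A secondary technical point is handling the case where $H$ is disconnected or has $\sigma(H) = 1/r$ (all colourings equitable), where $\chicr(H) = \chi(H)$ and the statement becomes a direct corollary of an almost-spanning version of Theorem~\ref{theo:alonyuster}, so that case can be dispatched separately and quickly.
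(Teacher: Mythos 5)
Your overall toolkit (regularity, a fractional/LP tiling of the reduced graph, Blow-up Lemma) is indeed the machinery the paper uses, but it uses it to prove the stronger intermediate statement Theorem~\ref{theo:combined}, whose hypothesis carries an extra $+\alpha n$ of degree slack; Theorem~\ref{theo:almosttiling} itself is then derived by a short trick you are missing. Your plan has no way to cope with the fact that the hypothesis here is \emph{exactly} $\delta^*(G)\ge(1-1/\chicr(H))n$: after applying the Regularity Lemma you correctly note that the reduced graph only satisfies $\delta^*(R)\ge(1-1/\chicr(H)-\eps)\ell$, i.e.\ strictly below the threshold, and the fractional tiling lemma (Lemma~\ref{fractiling}) needs the threshold to be met, so the LP-duality step you invoke does not go through as stated. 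The paper sidesteps this by adding $m=2\chicr(H)\alpha n$ dummy vertices to each class, joined to everything outside their class; this boosts the degree to $(1-1/\chicr(H)+\alpha)n'$, one applies Theorem~\ref{theo:combined}(i) to the augmented graph, and then discards the at most $rm$ copies of $H$ that meet dummy vertices, losing only $rm(h-1)+C\le\psi n$ vertices. If you want to avoid such a reduction you must either build in the slack some other way (e.g.\ perturb $(a,b)$ and account for the resulting $O(\eps)n$ uncovered vertices) or prove an ``almost-perfect fractional tiling below threshold'' lemma; your proposal does neither.

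The second, more serious, gap is your central reduction: ``it suffices to find in $R$ a collection of vertex-disjoint copies of $K_r$ covering all but a small proportion of the vertices of each class.'' When $\sigma(H)<1/r$ this is false under the given degree condition. In the row-structured extremal graphs of Construction~\ref{construct:general}, every copy of $K_r$ uses exactly one vertex from each row, so a row of total size about $\sigma(H)\,r\ell$ caps any $K_r$-tiling of $R$ at covering only a $\sigma(H)r<1$ fraction of the vertices, even though $\delta^*$ meets the $1-1/\chicr(H)$ bound; and conversely, if you do have an unweighted $K_r$-tiling, its cells are balanced $r$-tuples of clusters, so splitting them in the proportions $\sigma_1,\dots,\sigma_r$ cannot change the class sizes and again leaves a constant fraction uncovered. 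The object that makes the threshold $1-1/\chicr(H)$ work is precisely the $(a,b)$-weighted \emph{rooted} fractional $K_r$-tiling of Lemma~\ref{fractiling} with $a=\sigma(H)h$ and $b=(1-\sigma(H))h/(r-1)$: clusters are fractionally shared among many rooted cliques, the resulting cells have parts in ratio $a:b:\cdots:b$, and these unbalanced cells are then perfectly $H$-tiled via Proposition~\ref{completetiling} and the Blow-up Lemma. You mention ``colour-weighted cliques'' parenthetically, which is the right idea, but your primary route would fail, and the super-regularity of the cells comes from cleaning up bad vertices in regular pairs, not from the degree condition as you assert.
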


Bush and Zhao also addressed this problem for the case of $r=2$ and obtained a similar result to Theorem~\ref{BZ:degalpha} -- one without considering the sizes of the connected components -- but their result gives an $H$-tiling covering all but a constant number of vertices.
\begin{theo}[Bush-Zhao~\cite{BZ}] \label{BZ:almosttiling}
For any bipartite graph $H$, there exist constants $n_0=n_0(H)$ and $c=c(H)$ such that whenever $G$ is a~balanced bipartite graph with $n\geq n_0$ vertices in each part,  $\delta(G)\geq (1-1/\chicr(H))n$ suffices to ensure an $H$-tiling of $G$ that covers all but at most $c$ vertices.
\end{theo}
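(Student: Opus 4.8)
The plan is to prove Theorem~\ref{BZ:almosttiling} by the regularity method. Write $\sigma = \sigma(H)$; since $\chi(H) = 2$ we have $1 - 1/\chicr(H) = \sigma$, so the hypothesis reads $\delta(G) \ge \sigma n$. Note that because $G$ has no edges inside either part, one cannot deduce the statement from the non-partite Shokoufandeh--Zhao theorem (whose degree demand on the $2n$-vertex graph would be $2\sigma n$), so a genuinely bipartite argument is required. First I would apply the Szemer\'edi Regularity Lemma to $G$, respecting its bipartition, to obtain for small $\eps$ an $\eps$-regular partition of each part into $k$ clusters of common size, with bipartite reduced graph $R$ on vertex parts $P, Q$ of size $k$ whose edges are the $\eps$-regular pairs of density at least $d$; a routine computation gives $\delta(R) \ge (\sigma - o(1))k$. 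The reduction is then: any edge $uv$ of $R$ can be made super-regular after deleting a bounded fraction of each of its two clusters, and by the Blow-up Lemma the resulting pair admits a perfect $H$-tiling of any balanced blow-up $K_{at, bt}$ whenever $(a,b)$ is an admissible bipartition-type of $H$ (for connected $H$, $\{a,b\}$ is the pair of colour-class sizes; in general one ranges over all ways of $2$-colouring $H$). Hence it suffices to find weights on the edges of $R$ together with, for each edge, an apportionment of its endpoints' cluster-mass between the two roles of some type $(a,b)$, so that all but $O(1)$ clusters are essentially used up and the total mass in the $a$-role equals that in the $b$-role for each type.

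The heart of the proof is producing such a near-perfect weighted tiling of $R$ from $\delta(R) \ge (\sigma - o(1))k$. I would phrase this as a linear program -- variables for the amount of each (edge, type, orientation), constraints saying that each cluster is (almost) fully consumed and that the role-balance holds -- and argue by LP duality: if no solution leaving only $O(1)$ unused clusters exists, a Farkas-type dual certificate forces $R$ into a narrow extremal configuration, determined by the exact value of $\sigma$ through the definition of $\chicr(H)$. In that extremal configuration the tiling can be exhibited by hand, and outside it one derives a contradiction. This is where the critical chromatic number threshold is genuinely used, and the step I expect to require the most work: it is essentially a stability statement describing when Koml\'os's inequality is close to tight, with only a constant defect permitted.

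Given the weighted tiling of $R$, I would round it to integer cluster-allocations (losing $O(1)$ mass per cluster), delete a bounded number of vertices to make all used pairs super-regular, and invoke the Blow-up Lemma to embed the copies of $H$; this covers all of $G$ except the exceptional set of the regular partition, the deleted vertices, and the small remainder of each cluster -- altogether $o(n)$ vertices. It remains to reduce this uncovered set $U$ to constant size. For this I would use the min-degree hypothesis directly on $U$: while $|U|$ exceeds a suitable constant, the bipartite neighbourhood structure of $U$ relative to the current tiling permits an augmenting move -- delete one or two copies of $H$ and re-tile the union of their vertex sets together with some vertices of $U$, strictly decreasing $|U|$ -- unless $G$ lies in a bounded family of near-extremal configurations, each dispatched by an ad hoc argument. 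Iterating this move terminates with $|U| \le c(H)$.

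The principal obstacle, as indicated, is obtaining a constant rather than a linear defect: this forces stability analyses both in the reduced graph, for the weighted tiling, and in $G$ itself, for the final cleanup, and it is precisely where this $r = 2$ statement diverges from Theorem~\ref{theo:almosttiling}, since for bipartite $H$ one cannot absorb divisibility obstructions into a linear error term but must fold them into the constant $c$. A secondary complication is disconnected $H$: several bipartition-types coexist, making the role-balance constraints in the linear program more intricate; the further parameter ${\rm gcd}_{\mathrm{cc}}(H)$ does not intervene here because we demand only an almost-perfect tiling, so every type stays available and the analysis need not track which subset of types can be simultaneously balanced.
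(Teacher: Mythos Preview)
This theorem is not proved in the paper at all: it is quoted as a result of Bush and Zhao~\cite{BZ} (note the attribution in the theorem header), and the paper's own contributions are restricted to the case $\chi(H)=r\ge 3$. So there is no ``paper's own proof'' to compare your proposal against; the authors simply cite Theorem~\ref{BZ:almosttiling} as known, to complement their Theorem~\ref{theo:almosttiling} which handles $r\ge 3$.

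Your outline is a plausible sketch of how one might approach the Bush--Zhao result, and the regularity/fractional-tiling/blow-up architecture is in the same spirit as this paper's proof of Theorem~\ref{theo:combined} for $r\ge 3$. But be aware that the genuinely hard part of the Bush--Zhao theorem is exactly the step you flag as ``the principal obstacle'': reducing the defect from $o(n)$ to a constant. Your proposal here is only a description of intent (``an augmenting move \dots unless $G$ lies in a bounded family of near-extremal configurations, each dispatched by an ad hoc argument''), not an argument. In the present paper the authors explicitly do \emph{not} attempt this for $r\ge 3$ --- their Theorem~\ref{theo:almosttiling} leaves $\psi n$ vertices uncovered, not $O(1)$ --- precisely because the constant-defect cleanup requires a substantial stability analysis that their method does not supply. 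So your sketch, while not wrong in broad outline, underestimates how much of the Bush--Zhao proof lives in that final step.
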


To avoid repetition in proving Theorems~\ref{theo:degalpha} and Theorems~\ref{theo:almosttiling}, we deduce each from the following combined statement.

\begin{theo}\label{theo:combined}
Let $H$ be a graph on $h$ vertices with $\chi(H)=r\geq 3$. For any $\alpha > 0$ there exist $n_0 = n_0(\alpha,H)$ and $C = C(\alpha, H)$ such that the following statements hold for any balanced $r$-partite graph $G$ on $rn$ vertices with $\delta^*(G) \geq \left(1 - \frac{1}{\chicr(H)} + \alpha \right) n$ and $n \geq n_0$.
\begin{enumerate}[(i)]
\item $G$ admits an $H$-tiling covering all but at most $C$ vertices of $G$.
\item If $\gcd(H) = 1$ and $rn$ is divisible by $h$ then $G$ admits a perfect $H$-tiling.
\end{enumerate}
\end{theo}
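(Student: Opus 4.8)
The plan is to prove Theorem~\ref{theo:combined} by the regularity method, in the spirit of the proofs of the multipartite Hajnal--Szemer\'edi and Alon--Yuster theorems, but with the relevant minimum-degree threshold governed by $\chicr(H)$ in place of $\chi(H)$. Fix a constant hierarchy $1/n_0 \ll 1/C \ll \beta \ll \eps \ll d \ll \alpha, 1/h$. First I would reserve a small \emph{absorbing} $H$-tiling: a family $\mathcal{A}$ of vertex-disjoint copies of $H$ in $G$ covering at most $\beta n$ vertices such that, for every set $W$ with at most $\beta n$ vertices in each class, the graph $G[V(\mathcal{A}) \cup W]$ has an $H$-tiling leaving at most $C$ vertices uncovered, and in fact a \emph{perfect} $H$-tiling when $\gcd(H) = 1$ and $h$ divides $|V(\mathcal{A}) \cup W|$. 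This absorber is assembled from many bounded \emph{flexible gadgets}, each a bounded union of copies of $H$ that can be re-tiled so as to swallow a few extra vertices in an essentially arbitrary distribution across the $r$ classes; the degree hypothesis $\delta^*(G) \ge (1 - 1/\chicr(H) + \alpha)n$ produces enough such gadgets by supersaturation, and when $\gcd(H) = 1$ the index vectors $(|X_1^\phi|, \dots, |X_r^\phi|)$ of the proper $r$-colourings $\phi$ of $H$ span, together with the divisibility hypothesis, a lattice fine enough for the gadgets to correct any small residue.

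Next I would apply the $r$-partite regularity lemma to $G' := G - V(\mathcal{A})$, which is balanced $r$-partite on $rn'$ vertices with $n' \ge (1 - \beta)n$ and $\delta^*(G') \ge (1 - 1/\chicr(H) + \alpha/2)n'$, obtaining clusters of common size $m$, a small exceptional set in each class, and a reduced $r$-partite graph $R$ on $rk$ vertices with an edge between two clusters exactly when they form an $\eps$-regular pair of density more than $d$; a routine computation gives $\delta^*(R) \ge (1 - 1/\chicr(H) + \alpha/4)k$. The core of the argument is to find in $R$ a near-perfect \emph{fractional $H$-tiling}: a family $\mathcal{K}$ of vertex-disjoint copies of $K_r$ in $R$, each assigned a multiset of proper $r$-colourings of $H$ paired with bijections from their colour classes to the clusters of that $K_r$, covering all but $o(k)$ clusters and using each cluster to within $o(m)$ of its full size. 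This is exactly where $\chicr(H)$ enters: the existence of such a structure is the feasibility of a packing linear program whose dual is a weighting of the clusters, and one shows that the bound $\delta^*(R) \ge (1 - 1/\chicr(H))k$ precludes any dual certificate of infeasibility — this is the multipartite analogue of Koml\'os's computation of $\chicr(H)$, and the extremal constructions of Section~\ref{sec:construct} show that the threshold is sharp.

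From this fractional structure I would pass to an integral one, rounding the weights at the cost of $o(k)$ clusters and $o(m)$ vertices per cluster to obtain a genuine multiset of copies of $H$ assigned to the clusters of each $K_r \in \mathcal{K}$. For part~(ii) I would, before embedding, \emph{edit} the cluster sizes — moving only $O(1)$ vertices between clusters inside each $K_r$ and into or out of the uncovered part — so that the assigned multiset of colourings fills each cluster \emph{exactly}; this succeeds because when $\gcd(H) = 1$ the lattice generated by the index vectors of $H$ is, given $h \mid rn$, fine enough to realise every required residue, and this is the only place the divisibility hypothesis of~(ii) is used. The embedding is then standard: within each $K_r \in \mathcal{K}$ the assigned colourings are placed greedily into the (edited) clusters via the embedding lemma for $\eps$-regular $r$-tuples, using $\eps \ll d$. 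The result is an $H$-tiling of $G'$ whose uncovered set $W$ consists of the regularity exceptional set together with an $o(m)$ remainder per cluster in case~(i), and is empty in case~(ii). In case~(i), $W$ has at most $\beta n$ vertices in each class once $\eps$ is small enough, so applying the absorber to $V(\mathcal{A}) \cup W$ completes the tiling up to at most $C$ uncovered vertices; in case~(ii) the absorber merely absorbs $V(\mathcal{A})$ itself, and we obtain a perfect $H$-tiling.

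The main obstacle is the reduced-graph step, because the hypothesis $\delta^*(R) \ge (1 - 1/\chicr(H))k$ lies \emph{below} the multipartite $K_r$-tiling threshold $(1 - 1/r)k$, so there need be no near-perfect $K_r$-tiling of $R$ at all; one must instead exploit the unbalanced colourings of $H$ to make the weaker fractional statement go through, which is precisely the LP-feasibility computation forcing $\sigma(H)$ and hence $\chicr(H)$. The exact editing for~(ii) is the second delicate point: it must keep the clusters balanced across the copies of $K_r$, move only boundedly many vertices so as not to disturb $\eps$-regularity, and still reach an arbitrary residue in the index-vector lattice — which genuinely needs $\gcd(H) = 1$, and is also why the bipartite case $r = 2$ behaves differently, the index-vector lattice there being too coarse unless one additionally controls $\mathrm{gcd}_{\mathrm{cc}}(H)$.
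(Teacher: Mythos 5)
Your proposal has two genuine gaps, and both are at the places where the paper's argument is specifically engineered to avoid them. First, the absorbing lemma you posit cannot hold at this degree threshold. Take the near-extremal graphs of Construction~\ref{construct:general} in which the first row $V^1$ has size only slightly above $\sigma(H)rn$; these satisfy $\delta^*(G)\ge(1-1/\chicr(H)+\alpha)n$, but every copy of $H$ in such a $G$ has at least $\sigma(H)h$ of its vertices in each row, and at most $(1-(r-1)\sigma(H))h$ in row $V^1$. If $W$ is chosen entirely inside rows $V^2,\dots,V^r$ with $\beta n$ vertices in each class, then a perfect $H$-tiling of $G[V(\mathcal{A})\cup W]$ forces $|V(\mathcal{A})\cap V^1|\ge\sigma(H)\bigl(|V(\mathcal{A})|+r\beta n\bigr)$, which is incompatible with an absorber covering at most $\beta n$ vertices (and with the composition bound above) already when $\sigma(H)>1/(2r)$. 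In other words, ``absorb an arbitrary small leftover'' runs straight into the space barrier that is the very reason the threshold is $\chicr(H)$ rather than $\chi(H)$; supersaturation does not rescue it, and the paper uses no absorption at all.

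Second, your reduced-graph object --- vertex-disjoint copies of $K_r$ in $R$ covering all but $o(k)$ clusters, each cluster used to within $o(m)$ of its size --- \emph{is} a near-perfect integral $K_r$-tiling of $R$, which, as you yourself observe, need not exist below the $(1-1/r)k$ threshold; in the reduced graph of the same near-extremal example every $K_r$ must use a cluster from the small row, so vertex-disjoint copies can cover at most an $r\sigma(H)<1$ proportion of the clusters, and no rounding of a fractional solution can produce your structure. The paper resolves this differently: Lemma~\ref{fractiling} (proved via Farkas' Lemma) yields a \emph{perfect $(a',b')$-weighted fractional} $K_r$-tiling of $R$ with rational weights of bounded denominator, where $a'=\sigma(H)h+\alpha h/2$ is placed at the root; each cluster is then \emph{split} into subclusters proportional to these weights, so every positively-weighted copy of $K_r$ owns its own $r$-tuple of subclusters with class ratio $a':b':\dots:b'$, and no integral tiling of $R$ is ever needed. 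Exceptional vertices are removed by greedily deleting copies of $H$ through them (Proposition~\ref{deletevertex}); the divisibility of subcluster sizes needed for (ii) is corrected by deleting copies of the gadget $\U(H)$ of Proposition~\ref{defU}, routed between subclusters via copies of $K_r$ sharing a cluster (Proposition~\ref{prop:meetingKrs} --- this is where $r\ge3$ is used and why $r=2$ additionally involves ${\rm gcd}_{\mathrm{cc}}(H)$); and each $r$-tuple of subclusters is tiled \emph{perfectly} by combining Proposition~\ref{completetiling} --- which requires the smallest class to have proportion strictly above $\sigma(H)$, the reason for the $a\to a'$ perturbation --- with the Blow-up Lemma (Theorem~\ref{theo:blow-up}) applied to super-regular pairs. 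Your greedy embedding via the regularity embedding lemma cannot cover clusters exactly, so even granting your earlier steps, case (ii) would not close as written.
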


We prove Theorem~\ref{theo:combined} in Section~\ref{sec:proof} after establishing the necessary preliminaries in Section~\ref{sec:preliminaries}. 
Theorem~\ref{theo:almosttiling} then follows from Theorem~\ref{theo:combined} by a short deduction, which is given in Section~\ref{sec:deduce}, whilst Theorem~\ref{theo:degalpha} is immediate from combining Theorems~~\ref{theo:alonyuster} and \ref{theo:combined} as follows. 

\begin{proof}[Proof of Theorem~\ref{theo:degalpha}]
If $\gcd(H) > 1$ then $\chi^*(H) = \chi(H) = r$, so the existence of such an $n_0$ is given by Theorem~\ref{theo:alonyuster}. On the other hand, if $\gcd(H) = 1$ then $\chi^*(H) = \chicr(H)$, so the existence of such an $n_0$ is given by Theorem~\ref{theo:combined}(ii).
\end{proof}

\subsection{Notation}
We write $x \ll y$ to mean that for any $y > 0$ there exists $x_0 > 0$ such that for any $x$ with $0 < x \leq x_0$ the subsequent statements hold. Similar statements with more variables are defined similarly. We also write $x = y \pm z$ to mean that $y-z \leq x \leq y+z$. We omit floor and ceiling symbols when these do not affect the argument.

\section{Preliminaries} \label{sec:preliminaries}

\subsection{Fractional tilings via linear programming} 
We use the well-known Farkas' Lemma (see \cite[Corollary 7.1d]{Sch}). For this, recall that for a set $Y \subseteq \R^d$ the positive cone $\mathrm{PosCone}(Y)$ of $Y$ is the set of all linear combinations of members of $Y$ with non-negative coefficients.

\begin{theo}[Farkas' Lemma] \label{farkas}
Suppose that $\vb \in \R^d \sm \mathrm{PosCone}(Y)$ for some finite set $Y \subseteq \R^d$.
Then there is some $\xb \in \R^d$ such that $\xb \cdot \yb \leq 0$ for every $\yb \in Y$ and $\xb \cdot \vb > 0$.
\end{theo}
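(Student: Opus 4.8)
The plan is to deduce this from two facts: that a finitely generated cone is closed, and that the nearest-point projection onto a nonempty closed convex set produces a separating functional.

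First I would write $C := \mathrm{PosCone}(Y)$ and note that $C$ is convex, contains $\0$, and is closed under multiplication by non-negative scalars. The one substantive preliminary step is to show that $C$ is a \emph{closed} subset of $\R^d$; this is exactly where finiteness of $Y$ is used. I would prove it via a Carath\'eodory-type reduction: by induction on the number of positive coefficients one sees that $C = \bigcup_{Y'} \mathrm{PosCone}(Y')$, where the union ranges over the linearly independent subsets $Y' \subseteq Y$ (if some non-negative combination uses a linearly dependent set of generators, one can shift along the dependence to zero out a coefficient while keeping all others non-negative). Each $\mathrm{PosCone}(Y')$ with $Y'$ linearly independent is the image of a closed orthant under an injective linear map, hence closed, and a finite union of closed sets is closed.

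Given that $C$ is closed, convex and nonempty with $\vb \notin C$, I would let $\zb \in C$ minimise $\|\vb - \ub\|$ over $\ub \in C$; such a $\zb$ exists since $C$ intersected with the closed ball of radius $\|\vb\|$ about $\vb$ is compact and nonempty (it contains $\0$) and the norm is continuous. Set $\xb := \vb - \zb \neq \0$. For any $\ub \in C$ and $t \in [0,1]$ convexity gives $\zb + t(\ub - \zb) \in C$, so $\|\xb - t(\ub - \zb)\|^2 \geq \|\xb\|^2$; expanding, dividing by $t$, and letting $t \to 0^+$ yields $\xb \cdot (\ub - \zb) \leq 0$ for every $\ub \in C$. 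Applying this with $\ub = \lambda \zb \in C$ for $\lambda \geq 0$ (take $\lambda$ large, then $\lambda = 0$) forces $\xb \cdot \zb = 0$, whence $\xb \cdot \ub \leq 0$ for all $\ub \in C$ and in particular $\xb \cdot \yb \leq 0$ for every $\yb \in Y$; and $\xb \cdot \vb = \xb \cdot \zb + \xb \cdot \xb = \|\xb\|^2 > 0$, as required.

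The only delicate point — and the genuine content of Farkas' Lemma — is the closedness of $C$; this is precisely the hypothesis that fails for infinitely generated cones, where a point may lie in the closure of the cone without lying in the cone itself. Once closedness is established, the rest is the routine convex-geometry fact that the nearest-point projection yields a separating hyperplane, which for a cone can be taken to pass through the apex.
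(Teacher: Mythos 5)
Your proof is correct. Note that the paper does not prove this statement at all: it quotes Farkas' Lemma from Schrijver's book (Corollary 7.1d there), where it is derived from the Fundamental Theorem of Linear Inequalities, which Schrijver proves by a combinatorial pivoting/induction argument in the spirit of the simplex method. Your route is the standard analytic one instead: a Carath\'eodory-type reduction showing that every point of $\mathrm{PosCone}(Y)$ is a non-negative combination of a linearly independent subset of $Y$, hence that the cone is a finite union of closed sets and therefore closed, followed by nearest-point projection to produce the separating functional, with the usual trick of sliding along the ray through $\zb$ to force $\xb \cdot \zb = 0$ so that the hyperplane passes through the apex. All the steps check out: the dependence-shifting argument correctly zeroes out a coefficient while preserving non-negativity; each cone on a linearly independent set is closed because an injective linear map on a finite-dimensional space is bounded below and hence proper, so it carries the closed orthant to a closed set (you might spell out this one-line justification, since ``continuous image of a closed set'' alone is not enough); the compactness argument for the existence of $\zb$ correctly uses that $\0 \in C$ lies in the ball of radius $\|\vb\|$ about $\vb$; and the first-variation inequality $\xb \cdot (\ub - \zb) \leq 0$ together with $\ub = \lambda\zb$ for $\lambda = 0$ and $\lambda$ large gives exactly the stated conclusion, since $\xb \cdot \vb = \|\xb\|^2 > 0$. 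You also correctly identify closedness of the finitely generated cone as the crux; in Schrijver's development the analogous content is carried by the finite basis/pivoting argument, so the two approaches trade a combinatorial induction for a small amount of finite-dimensional topology, and either is a legitimate self-contained proof of the lemma as used in this paper.
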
 

Let $G$ be a graph on $n$ vertices and let $v_1, \ldots, v_{n}$ be any fixed ordering of its vertices. For a subset of vertices $S$, we denote by $\1_G(S)$ the characteristic vector of $S$, that is, the vector $(x_1,\ldots, x_{n}) \in \mathbb{R}^n$ such that $x_i=1$ for $v_i\in S$ and  $x_i=0$ for $v_i\not\in S$. If $H$ is a subgraph of $G$, we write $\1_G(H)$ instead of~$\1_G(V(H))$. We also write $\1$ for the all-ones vector (the dimension will always be clear from the context).

For a graph $H$, denote by $\K_H(G)$ the set of subgraphs in $G$ isomorphic to $H$.
A \emph{fractional $H$-tiling} in $G$ assigns a weight $w(H') \geq 0$ to each $H'\in\K_H(G)$ such that for any vertex $x \in V(G)$ we have
\begin{align}
   \sum \{w(H') \ |\ H'\in \K_H(G), x\in V(H')\} \leq 1 . \label{eq:tiling}
\end{align}
The fractional $H$-tiling is \emph{perfect} if we have equality in (\ref{eq:tiling}) for every $x \in V(G)$. Equivalently, weights form a fractional $H$-tiling in $G$ if $\sum_{H'\in \K_H(G)} w(H')\,\1_G(H') \leq \1$, where the vector inequality is pointwise, and we have equality if and only if the fractional $H$-tiling is perfect.

Given an integer $r \geq 2$, a \emph{rooted copy of $K_r$} in $G$ is a copy of $K_r$ in which one vertex is designated to be the root. Similarly, given rational numbers $a, b > 0$, an \emph{$(a, b)$-weighted rooted copy of $K_r$} in $G$ is a rooted copy of $K_r$ with the root labelled by $a$ and the remaining vertices by $b$. With every $(a, b)$-weighted rooted copy of $K_r$, $K$, we associate the \emph{weighted characteristic vector} $\1_{a, b, G}(K)$ with $a$ at the coordinate corresponding to the root, $b$ at the other vertices of $K$, and $0$ otherwise. We denote by $\K_{a, b, r}(G)$ the set of all $(a, b)$-weighted rooted copies of $K_r$ in $G$.

This notion extends to the definition of a weighted fractional tiling of $G$:  an \emph{$(a, b)$-weighted fractional $K_r$-tiling} in $G$ consists of a weight $w(K)$ for every $K \in \K_{a, b, r}(G)$ such that $\sum_{K\in\K_{a, b, r}(G)} w(K)\1_{a, b, G}(K) \leq \1$ (where the inequality should be again interpreted pointwise). The tiling is \emph{perfect} if we have equality.

\begin{lem} \label{fractiling}
Let $r\geq 3$ be an integer, let $a$ and $b$ be rational numbers such that $0<a\leq b$ and define $h:= a+(r-1)b$. If $G$ is a balanced $r$-partite graph on $rn$ vertices with $\delta^*(G) \geq (1-b/h)n$ then $G$ admits a perfect $(a, b)$-weighted fractional $K_r$-tiling.
\end{lem}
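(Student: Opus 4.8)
The plan is to prove the contrapositive using Farkas' Lemma. Suppose $G$ is a balanced $r$-partite graph on $rn$ vertices with $\delta^*(G) \ge (1 - b/h)n$ that does \emph{not} admit a perfect $(a,b)$-weighted fractional $K_r$-tiling. The vector $\1 \in \R^{rn}$ is then not achievable as $\sum_K w(K) \1_{a,b,G}(K)$ with equality, but one has to be slightly careful: what Farkas' Lemma in the stated form gives is a separating hyperplane when $\1 \notin \mathrm{PosCone}(Y)$ for $Y = \{\1_{a,b,G}(K) : K \in \K_{a,b,r}(G)\}$. The standard trick to convert "no \emph{perfect} fractional tiling" into "$\1 \notin \mathrm{PosCone}$" is to adjoin the negative coordinate vectors $-e_v$ (for $v \in V(G)$) to $Y$: then $\1 \in \mathrm{PosCone}(Y \cup \{-e_v\}_v)$ if and only if some fractional tiling dominates $\1$ coordinatewise, which (after scaling down) is equivalent to a perfect fractional tiling existing. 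So assuming no perfect fractional $K_r$-tiling, Farkas gives $\xb = (x_v)_{v \in V(G)} \in \R^{rn}$ with $\xb \cdot \yb \le 0$ for all $\yb \in Y \cup \{-e_v\}$ and $\xb \cdot \1 > 0$. The conditions $\xb \cdot (-e_v) \le 0$ say $x_v \ge 0$ for all $v$; the condition $\xb \cdot \1 > 0$ says $\sum_v x_v > 0$; and the condition on each weighted clique says: for every rooted copy of $K_r$ with root $u$ and other vertices $w_1, \dots, w_{r-1}$, we have $a\, x_u + b(x_{w_1} + \dots + x_{w_{r-1}}) \le 0$. Since all $x_v \ge 0$ and $a, b > 0$, this forces $x_u = x_{w_1} = \dots = x_{w_{r-1}} = 0$ whenever $u, w_1, \dots, w_{r-1}$ span a $K_r$ in $G$ with a valid root choice --- but every vertex of a $K_r$ can serve as root, so in fact every vertex lying in \emph{any} copy of $K_r$ in $G$ has weight $0$.

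Now I would derive a contradiction from the degree condition. Let $S = \{v : x_v > 0\}$; by $\xb \cdot \1 > 0$, $S$ is nonempty, and by the previous paragraph no vertex of $S$ lies in a copy of $K_r$ in $G$. Pick $v \in S$, say $v \in V_1$. The key point is that $\delta^*(G) \ge (1 - b/h)n$ with $h = a + (r-1)b \ge rb$ (since $a \le b$) gives $\delta^*(G) \ge (1 - b/(rb))n = (1 - 1/r)n$, so actually $\delta^*(G) > (1-1/r)n$ would let us greedily build a $K_r$ through $v$: iteratively, having chosen $v = v_1 \in V_1, v_2 \in V_2, \dots, v_k \in V_k$ forming a clique, the common neighbourhood in $V_{k+1}$ has size at least $n - k \cdot (n - \delta^*(G)) = n - k(n/r) $ via union bound, which is positive for $k \le r-1$ when $\delta^*(G) \ge (1-1/r)n$; with the strict-type slack this gives a $K_r$ containing $v$, contradicting $v \in S$. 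The one subtlety is the boundary case: when $a = b$ we get $h = rb$ exactly and $\delta^*(G) \ge (1-1/r)n$ only, so the greedy common-neighbourhood bound $n - (r-1)(n/r) = n/r$ is merely positive, which still suffices (it is at least $1$ for $n \ge r$), so a $K_r$ through $v$ still exists. Thus in all cases $S$ must be empty, contradicting $\sum_v x_v > 0$.

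I expect the main obstacle to be the bookkeeping in the Farkas' Lemma application --- specifically getting the reduction from "no perfect weighted fractional $K_r$-tiling" to "$\1 \notin \mathrm{PosCone}(Y \cup \{-e_v\})$" exactly right, including verifying that the separating vector $\xb$ really can be taken with nonnegative coordinates and that the clique inequalities apply for every choice of root. Once the linear-programming duality is set up cleanly, the combinatorial core is short: nonnegativity of $\xb$ plus positivity of $a, b$ collapses the clique constraints to "every vertex in a $K_r$ has weight zero", and then the minimum-degree hypothesis (which is exactly calibrated so that $1 - b/h \ge 1 - 1/r$) forces every vertex into some $K_r$ via a one-line greedy argument, so $\xb = \0$, the desired contradiction. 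A secondary point worth stating carefully is why $\delta^*(G) \ge (1-1/r)n$ suffices to put \emph{every} vertex (not just most) into a copy of $K_r$: the greedy extension works from any starting vertex because at each of the $r-1$ extension steps we delete at most $(r-1)(n - \delta^*(G)) \le (r-1)\cdot n/r < n$ vertices from the relevant class, leaving a valid choice.
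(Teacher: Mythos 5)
There is a genuine gap at the very first step: your reduction of ``no perfect $(a,b)$-weighted fractional $K_r$-tiling'' to ``$\1 \notin \mathrm{PosCone}(Y \cup \{-e_v\}_{v})$'' is invalid. A perfect fractional tiling is by definition a nonnegative combination of the vectors $\1_{a,b,G}(K)$ that equals $\1$ exactly, so ``no perfect tiling'' is already equivalent to $\1 \notin \mathrm{PosCone}(Y)$ with $Y = \{\1_{a,b,G}(K) : K \in \K_{a,b,r}(G)\}$ --- no adjunction is needed, and this is exactly how the paper applies Farkas' Lemma. Once you adjoin the vectors $-e_v$, membership of $\1$ in the cone becomes equivalent to the existence of nonnegative weights with coverage \emph{at least} $1$ at every vertex, i.e.\ to the statement that every vertex lies in some copy of $K_r$ (put a huge weight on one $K_r$ through each vertex). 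Your claim that such a dominating assignment can be ``scaled down'' to an exact one is false: scaling a weight $w(K)$ lowers the coverage at all $r$ vertices of $K$ simultaneously, and in general exact equality is unreachable. (Already for graph tilings by single edges, the star $K_{1,2}$ admits coverage $\geq 1$ everywhere but no perfect fractional matching; for the present lemma, the row construction $G((n_{ij}),r)$ of Section~\ref{sec:construct} with unequal row sizes and $a=b$ has every vertex in a copy of $K_r$ yet no perfect fractional $K_r$-tiling.) Consequently your argument, even if the Farkas bookkeeping were repaired, only proves ``every vertex of $G$ lies in a copy of $K_r$,'' which is strictly weaker than the lemma; the whole content of the lemma --- that the specific root/non-root weights $a,b$ calibrated against the threshold $(1-b/h)n$ force an \emph{exact} fractional decomposition --- is lost. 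A smaller error: from $a \leq b$ one gets $h = a+(r-1)b \leq rb$, hence $1-b/h \leq 1-1/r$, the opposite of what you wrote; the greedy construction of a $K_r$ through a given vertex still works directly from the hypothesis, since the common neighbourhood loses at most $(r-1)(n-\delta^*(G)) \leq (r-1)bn/h = n - an/h$ vertices per class, but it cannot rescue the overall strategy.

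For comparison, the paper's proof applies Farkas' Lemma to $Y$ alone, so the separating vector $\xb$ is \emph{not} nonnegative, and the clique inequalities cannot be collapsed vertexwise. The real work is then combinatorial: order each class by $\xb$-coordinate, split it into $r-1$ blocks of size $bn/h$ and one of size $an/h$ (after passing to a blow-up to make $bn/h$ an integer), and for each permutation $\pi$ of $[r]$ greedily build a rooted $K_r$ whose $j$th vertex dominates the whole block $V_{\pi(j)}^{j}$ in $\xb$-coordinate; averaging over all permutations yields $0 < (r-1)!\,\xb\cdot\1 \leq \tfrac{n}{h}\sum_\pi \xb \cdot \1_{a,b,G}(K_\pi) \leq 0$, a contradiction. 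Your proposal has no analogue of this step, and some such argument is unavoidable because the statement genuinely uses the weighted degree threshold, not merely the existence of cliques through every vertex.
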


\proof We will first prove the lemma using the assumption that $bn/h$ is an integer and justify that assumption at the end of the proof. 

Suppose for a contradiction that some graph $G$ as in the statement of the lemma does not admit a~perfect $(a, b)$-weighted fractional $K_r$-tiling, and that $bn/h$ is an integer. This is equivalent to saying that $\1 \notin \mathrm{PosCone}(Y)$ for the set $Y = \{\1_{a, b, G}(K): K \in \K_{a, b, r}(G)\}$. So by Farkas' Lemma, there exists $\xb \in \R^{rn}$ such that 
\begin{align}
   \xb \cdot \1 &> 0 \label{eq:farkasone} 
   \intertext{and}
   \xb \cdot \1_{a, b, G}(K) &\leq 0, \mbox{ for every $K \in \K_{a, b, r}(G)$.} \label{eq:farkasK}
\end{align}
Fix such an $\xb$, and let $v_j^1, \dots, v_j^n$ be the vertices of the $j$th vertex class of $G$,
ordered by decreasing $\xb$-coordinate, that is, so that $\xb \cdot  \1_G(\{v_j^s\}) \geq \xb \cdot  \1_G(\{v_j^t\})$ for any $s \leq t$.

Because $bn/h$ is an integer, each vertex class $V_i$ can be partitioned as follows:
\begin{align}
   V_i^j &:= \{v_i^\ell : (j-1)\, bn/h+1\leq\ell\leq j\, bn/h\}, & \forall j\in [r-1] \label{eq:toprows} \\
   V_i^r &:= \{v_i^\ell : (r-1)\, bn/h+1\leq\ell\leq n\} . & \label{eq:bottomrow}
\end{align}

For any permutation $\pi$ of $[r]$, we can greedily form an $(a, b)$-weighted rooted copy $K_\pi$ of $K_r$ as follows: First, let $u_1$ be the vertex in $V_{\pi(1)}$ for which the $\xb$-coordinate is largest. In our notation, $u_1=v_{\pi(1)}^{1}$. Next, for each $j \in \{2,\ldots,r\}$, let $u_j=v_{\pi(j)}^{t_j}$ be the vertex in $V_{\pi(j)}$ in the common neighborhood of $u_1, \ldots, u_{j-1}$ for which the $\xb$-coordinate is largest. It follows from the minimum degree condition that $t_j \leq (j-1)\, bn/h+1$, so for every $v_{\pi(j)}^\ell \in V_{\pi(j)}^j$ we have $\ell \geq t_j$; in other words every vertex in $V_{\pi(j)}^j$ has $\xb$-coordinate at most that of $u_j$. We assign weight $b$ to each of $u_1,\ldots,u_{r-1}$ and weight $a$ to $u_r$ (so $u_r$ is the root of $K_\pi$).

Since every vertex in $V_{\pi(j)}^j$ has $\xb$-coordinate at most that of $u_j$, we have
\begin{align}
   \xb\cdot\1_G\left(\bigcup_{j=1}^rV_{\pi(j)}^j\right) &\leq \sum_{j=1}^{r-1}\xb\cdot\1_G(\{u_j\}) \frac{bn}{h} \nonumber 
+ \xb\cdot\1_G(\{u_r\})\left(n-(r-1) \frac{bn}{h}\right) \nonumber \\
   &= \sum_{j=1}^{r-1}\xb\cdot\1_G(\{u_j\}) \frac{bn}{h} + \xb\cdot\1_G(\{u_r\}) \frac{an}{h} \nonumber \\
   & = \xb\cdot\left(\frac{n}{h} \1_{a,b,G}(K_\pi)\right) . \label{eq:xdomination}
\end{align}
This gives the following contradiction
\begin{align*}
   0 < (r-1)!\xb \cdot \1 = \sum_\pi \xb\cdot\1_G\left(\bigcup_{i=1}^r V_{\pi(j)}^j\right)\leq \frac{n}{h}\sum_\pi \xb\cdot\1_{a,b,G}(K_\pi)\leq 0,
\end{align*}
where each sum is taken over all permutations $\pi$ of $[r]$. The equality in this calculation  is due to the fact that the sets $V_i^j$ partition $V(G)$, and for any $i, j \in [r]$ there are precisely $(r-1)!$ permutations of $[r]$ with $\pi(j) = i$. The first inequality follows from (\ref{eq:farkasone}), the second inequality follows from (\ref{eq:xdomination}), and the final inequality follows from (\ref{eq:farkasK}).

In order to complete the proof, we justify the assumption that $bn/h$ is an integer. To see this, fix an integer $m$ such that $bnm/h$ is an integer, and let $G'$ be the \emph{$m$-fold blow-up of $G$}, in which each vertex $v \in V(G)$ is replaced by $m$ copies of $v$ in $G'$, and each edge $uv \in E(G)$ is replaced by $m^2$ edges between the copies of $u$ and $v$ in $G'$. Also set $n' := nm$. Then $G'$ is a balanced $r$-partite graph on $rn'$ vertices with $\delta^*(G') = m\delta^*(G) \geq (1-b/h)n'$, and $bn'/h = bnm/h$ is an integer. 

Given that the lemma holds in this case, $G'$ admits a perfect $(a, b)$-weighted fractional $K_r$-tiling. This naturally yields a perfect $(a, b)$-weighted fractional $K_r$-tiling in $G$ by taking the weight of each rooted copy of $K_r$ in $G$ to be the average of the weights of the $m^r$ corresponding rooted copies of $K_r$ in $G'$.
\endproof

\begin{rem} \label{rem:rationalweights}
A perfect $(a, b)$-weighted fractional $K_r$-tiling as guaranteed by \linebreak Lemma~\ref{fractiling} is the solution to a linear programming instance in which all coefficients are rational. Such an instance must have a rational solution, so we may assume that all weights in a perfect $(a, b)$-weighted fractional $K_r$-tiling given by Lemma~\ref{fractiling} are rational.
\end{rem}

\subsection{Editing cluster sizes} 
\label{sec:EditingClusterSizes}
Proposition~\ref{defU} below shows that we may `combine' copies of $H$ to form a complete $r$-partite graph $\U(H)$ whose vertex classes are all equal except for one class which has one extra vertex and one class which has one fewer vertex. This will allow us, in the proof of Theorem~\ref{theo:combined}, to delete copies of $\U(H)$ and thus modify the sizes of clusters of $H$ modulo $rh$.

\begin{prop} \label{defU}
Let $H$ be a graph on $h$ vertices with $\chi(H) = r \geq 3$ and $\gcd(H) = 1$. Then there exists an integer $s = s(H)$ for which the complete $r$-partite graph $\U(H)$ with one vertex class of size $srh+1$, one vertex class of size $srh-1$ and $r-2$ vertex classes of size $srh$ admits a perfect $H$-tiling.
\end{prop}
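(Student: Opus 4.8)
The plan is to build $\U(H)$ out of copies of $H$ by exploiting the assumption $\gcd(H)=1$. Recall that $\gcd(H)=1$ means the integer $1$ lies in the subgroup of $\Z$ generated by $\D(H)$, i.e.\ there are proper $r$-colourings $\phi_1,\dots,\phi_k\in\C$, indices $i_t,j_t\in[r]$, and integers $\lambda_1,\dots,\lambda_k\in\Z$ with $\sum_{t=1}^k \lambda_t\big(|X_{i_t}^{\phi_t}|-|X_{j_t}^{\phi_t}|\big)=1$. The idea is that a copy of $H$ coloured according to $\phi_t$, when placed so that its colour class $X_c^{\phi_t}$ sits inside vertex class $V_{\pi(c)}$ for a suitable permutation $\pi$, contributes an ``imbalance vector'' to the multiset of vertex-class sizes; by taking an appropriate signed combination of such copies we can realise the target imbalance $(+1,-1,0,\dots,0)$.

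First I would pass to the associated lattice/semigroup problem. For a proper $r$-colouring $\phi$ with colour classes of sizes $(|X_1^\phi|,\dots,|X_r^\phi|)$, and for a permutation $\pi\in\Permute([r])$, say a copy of $H$ has \emph{type} $(\phi,\pi)$ if its colour class $X_c^\phi$ is embedded in $V_{\pi(c)}$; such a copy uses $|X_{\pi^{-1}(i)}^\phi|$ vertices from $V_i$. Summing over a finite multiset $\T$ of types, the number of vertices used from $V_i$ is $f_i(\T):=\sum_{(\phi,\pi)\in\T}|X_{\pi^{-1}(i)}^\phi|$. I want to find such a multiset with $f_1(\T)-f_2(\T)=2$ (say) and $f_i(\T)=f_j(\T)$ for all other pairs, with all $f_i(\T)$ equal to a common value of the form $srh\pm\{0,1\}$ — more precisely, $f_1=srh+1$, $f_2=srh-1$, $f_i=srh$ otherwise. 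Using the $\lambda_t$ above: for each $t$ with $\lambda_t>0$, add $\lambda_t$ copies of type $(\phi_t,\pi_t)$ where $\pi_t$ sends $i_t\mapsto 1, j_t\mapsto 2$ and is otherwise arbitrary; for each $t$ with $\lambda_t<0$, add $|\lambda_t|$ copies of type $(\phi_t,\pi_t')$ where $\pi_t'$ instead sends $i_t\mapsto 2, j_t\mapsto 1$. The two families together contribute a net imbalance of exactly $2$ between $V_1$ and $V_2$ and $0$ between every other pair, \emph{after symmetrising} over a set of permutations that is invariant under $\Permute(\{3,\dots,r\})$ so that classes $3,\dots,r$ stay perfectly balanced — concretely, replace each single type by the orbit of size $(r-2)!$ under permuting $\{3,\dots,r\}$. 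Then every class size $f_i(\T)$ is a fixed multiple of $h$ plus the intended $\pm1$ or $0$; choosing $s$ large enough and padding with equitably-coloured copies of $H$ (or simply extra copies of the whole construction) makes the common value equal to $srh$. Since $\U(H)$ is a \emph{complete} $r$-partite graph, any $H$-tiling with these class sizes embeds greedily, and a perfect such tiling is exactly what we need.

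The steps, in order: (1) unpack $\gcd(H)=1$ into an explicit integer relation $\sum\lambda_t(|X_{i_t}^{\phi_t}|-|X_{j_t}^{\phi_t}|)=1$; (2) for each $t$, pick permutations routing the ``long'' and ``short'' classes of $\phi_t$ into $V_1,V_2$ with the sign of $\lambda_t$, and symmetrise over $\Permute(\{3,\dots,r\})$ to keep $V_3,\dots,V_r$ balanced; (3) form the multiset $\T$ of all these copies, compute that $f_1(\T)-f_2(\T)$ is a positive even multiple of $1$ times $(r-2)!$ — halve it if needed by taking $\T$ twice is the wrong parity fix; instead scale the whole construction so the imbalance is exactly $2(r-2)!$ and then only keep enough to get imbalance $2$, or argue the imbalance can be driven to any even value and combine with a ``swap one vertex'' gadget — and that all class sizes are congruent mod $h$; (4) add equitable filler copies of $H$ to equalise the balanced classes and to make the common size divisible by $rh$, yielding sizes $srh+1,srh-1,srh,\dots,srh$; (5) observe completeness of $\U(H)$ gives the perfect tiling for free.

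The main obstacle is step (3)–(4): ensuring the net imbalance is exactly $\pm1$ (not merely odd, or a larger integer) and simultaneously that \emph{all} class sizes land on the nose at $srh$ up to the prescribed $\pm1$, rather than at values differing by multiples of $h$. The parity/scaling bookkeeping — reconciling ``imbalance equals $1$'' coming from $\gcd=1$ with the factor $(r-2)!$ introduced by symmetrising, and with the need for every $f_i(\T)$ to be the \emph{same} multiple of $h$ — is where care is required. One clean way to finish is to first produce, by the above, \emph{some} complete $r$-partite graph tiled by $H$ with class sizes $(m+1,m-1,m,\dots,m)$ for some $m$ divisible by $h$ but possibly not by $rh$; then take $rh/\gcd(rh,m)$ — or simply $r$ — disjoint copies with the ``$\pm1$'' class rotated, which adds balanced multiples of $m$ to all classes and lets us inflate $m$ to a multiple of $rh$ while preserving a single net $(+1,-1)$ imbalance. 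I expect this rotation-and-rescale trick to be the crux of a short, clean argument.
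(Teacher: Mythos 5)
Your overall strategy (realise the class-size vector $(srh+1,srh-1,srh,\dots,srh)$ as a non-negative integer combination of ``type vectors'' of copies of $H$, using a relation $\sum_t\lambda_t(|X_{i_t}^{\phi_t}|-|X_{j_t}^{\phi_t}|)=1$ supplied by $\gcd(H)=1$, and then invoke completeness of $\U(H)$) is the right one --- the paper itself omits the proof, deferring to Proposition~3.6 of~\cite{M}, which runs along these lines. But your execution has genuine gaps exactly where you flag them, and some of the claims you lean on are false. First, after symmetrising over $\Permute(\{3,\dots,r\})$ every contribution to the $V_1$--$V_2$ imbalance is a multiple of $(r-2)!$, while the target imbalance is exactly $2$; none of your proposed repairs is carried out, and ``only keep enough to get imbalance $2$'' destroys the balance of $V_3,\dots,V_r$ that the symmetrisation bought you. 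Second, and more seriously, you never control $f_1-f_i$ and $f_2-f_i$ for $i\ge 3$: symmetrising forces $f_3=\cdots=f_r$, but their common value differs from $f_1,f_2$ by amounts depending on $|X_{i_t}^{\phi_t}|+|X_{j_t}^{\phi_t}|$ versus $2h/r$, and the assertion that ``every class size is a fixed multiple of $h$ plus the intended $\pm1$ or $0$'' is unjustified and false in general. Padding cannot repair this: equitably-coloured copies of $H$ need not exist, and any filler that adds equally to all classes preserves all differences, so it cannot ``equalise the balanced classes''. Finally, the rotation-and-rescale step only replaces the common size $m$ by $m+jrm$, which cannot change $m$ modulo $r$, so it does not in general reach a common size divisible by $rh$.

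A clean way to close these gaps is to reverse the order of operations: start from an exactly balanced family and create the imbalance by \emph{re-routing} copies rather than adding lopsided ones. Take $N$ copies (with $N$ a large multiple of $r$) of the full orbit, under all $r!$ placements, of each colouring $\phi_t$ appearing in your relation; this family contributes the constant vector with each coordinate $N k(r-1)!h$ (where $k$ is the number of colourings used), which you can take to be $srh$. Each full orbit contains $(r-2)!N$ copies with any prescribed pair of colour classes in positions $1$ and $2$, so for each $t$ you may pick $|\lambda_t|$ such copies and transpose their placements on positions $1$ and $2$ only; each swap changes the vector by $\pm(|X_{i_t}^{\phi_t}|-|X_{j_t}^{\phi_t}|)(e_1-e_2)$ and nothing else, so the total change is exactly $e_1-e_2$, giving precisely the sizes $srh+1,\,srh-1,\,srh,\dots,srh$ with no congruence or rebalancing issues. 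This swap-within-a-symmetrised-family bookkeeping is the missing crux of your step (3)--(4).
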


The proof of Proposition~\ref{defU} is straightforward and essentially identical to that of Proposition~3.6 from~\cite{M} (which gave an analogous statement for $r$-partite $r$-uniform hypergraphs $H$), so we omit it. To apply Proposition~\ref{defU} we make use of the following elementary proposition, which we will apply in the `reduced graph', and then apply Proposition~\ref{defU} within the graph induced by the clusters corresponding to $K$ and $K'$.

\begin{prop} \label{prop:meetingKrs}
Fix $r \geq 3$, and let $G$ be a balanced $r$-partite graph on $rn$ vertices with $\delta^*(G) > (1-\frac{1}{r-1})n$. Then for any vertices $u, v \in V(G)$ there are copies $K$ and $K'$ of $K_r$ in $G$ such that $u \in K$, $v \in K'$, and such that $K$ and $K'$ have at least one vertex in common.
\end{prop}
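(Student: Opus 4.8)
The plan is to find the vertices of $K$ and $K'$ greedily, vertex class by vertex class, using the minimum degree condition to ensure that common neighbourhoods are large enough to keep making choices. Say $u$ lies in vertex class $V_1$ and $v$ lies in vertex class $V_j$ for some $j$ (relabelling classes if necessary; if $u = v$ the statement is trivial, so assume $u \ne v$, and if $u, v$ lie in the same class then $j = 1$). The idea is to build $K$ with vertex set $\{u, w_2, \dots, w_r\}$ where $w_i \in V_i$, and $K'$ with vertex set $\{v, \dots\}$, arranging that $K$ and $K'$ share at least one vertex — the cleanest target is to make them share a single common vertex in some class distinct from those containing $u$ and $v$.

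First I would handle the case where $u$ and $v$ lie in different vertex classes, say $u \in V_1$ and $v \in V_2$. Since $\delta^*(G) > (1 - \tfrac{1}{r-1})n$, any two vertices in distinct classes have more than $(1 - \tfrac{2}{r-1})n = \tfrac{r-3}{r-1}n$ common neighbours in any third class, and more generally any set of $t$ vertices lying in $t$ distinct classes has more than $(1 - \tfrac{t}{r-1})n$ common neighbours in any further class, which is positive as long as $t \le r - 2$. So: pick $w_3 \in V_3$ adjacent to both $u$ and $v$ (possible since $r \ge 3$ gives $t = 2 \le r-2$). This $w_3$ will be the shared vertex. Now extend $\{u, w_3\}$ greedily to a copy of $K_r$ on classes $V_1, V_3, V_4, \dots, V_r$ together with $V_2$: at each step we have chosen vertices in at most $r-1$ classes and need a common neighbour in one more, so there is room to finish — but we must be slightly careful not to re-use $v$; since at the final step the common neighbourhood in $V_2$ has size more than $(1 - \tfrac{r-1}{r-1})n = 0$, that is not quite enough, so instead I would build $K$ on classes $V_1, V_3, \dots, V_r$ only (that is $r-1$ classes, choosing $r-1$ vertices) and separately note we still need an $r$th vertex. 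The fix is to be economical: build $K$ using $u, w_3$ and then one vertex in each of $V_4, \dots, V_r$, which is $r - 1$ vertices in $r-1$ classes; for the last of these, when we have $r-2$ vertices chosen, the common neighbourhood in the next class has size more than $(1 - \tfrac{r-2}{r-1})n = \tfrac{n}{r-1} > 0$, fine — but we are then missing a vertex in $V_2$. I would instead simply allow $K$ and $K'$ to jointly use all of $V_1, \dots, V_r$: let $K = \{u, w_3, x_4, \dots, x_r, y_2\}$ with $y_2 \in V_2 \setminus \{v\}$ chosen last from a common neighbourhood of $r-1$ vertices, which has size more than $0$; since we only need to avoid one vertex $v$ and $n$ is large, this succeeds. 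Symmetrically build $K' = \{v, w_3, x_4', \dots, x_r', y_1\}$ with $y_1 \in V_1 \setminus \{u\}$. Then $w_3 \in K \cap K'$.

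The remaining case is $u, v \in V_1$. Here pick $w_2 \in V_2$ adjacent to both $u$ and $v$ (common neighbourhood more than $\tfrac{r-3}{r-1}n \ge 0$ since $r \ge 3$), make $w_2$ the shared vertex, then extend $\{u, w_2\}$ and $\{v, w_2\}$ separately to copies of $K_r$ on classes $V_1, \dots, V_r$, each time choosing the vertex in $V_1$ (namely $u$, resp.\ $v$) first so that no conflict with the other copy arises, and finishing greedily as above; the last vertex in each is chosen from a common neighbourhood of $r-1$ vertices spanning $r-1$ classes, which is nonempty. I expect the main obstacle to be purely bookkeeping: making sure that at the final step of each greedy extension the common neighbourhood is genuinely nonempty (the bound $(1 - \tfrac{r-1}{r-1})n = 0$ is tight, so one must either use strict inequality in $\delta^*(G) > (1-\tfrac{1}{r-1})n$ carefully — it does give \emph{strictly} more than $0$, hence at least one vertex — or reorganise so that the last class sees only $r-2$ previously chosen vertices), and checking that the finitely many vertices we must avoid ($u$, $v$, and already-placed vertices) do not exhaust these common neighbourhoods, which holds for $n$ large since each such neighbourhood has size $\Omega(n)$ except possibly the very last, where strictness of the degree bound still leaves at least one choice.
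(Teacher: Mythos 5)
Your approach is essentially the paper's: fix a common neighbour $w$ of $u$ and $v$ in a vertex class containing neither (possible since $r \geq 3$ gives $\delta^*(G) > n/2$), then greedily extend $\{u,w\}$ and $\{v,w\}$ to copies of $K_r$, using the fact that any $j$ vertices lying in distinct classes have more than $\left(1-\frac{j}{r-1}\right)n$ common neighbours in any further class, which is strictly positive for $j \leq r-1$ because the degree bound is strict.

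The one place where your write-up does not hold together is the self-imposed requirement that $K$ avoid $v$ (and $K'$ avoid $u$). At the final step of each greedy extension the common neighbourhood of the $r-1$ already-chosen vertices is only guaranteed to be nonempty, i.e.\ to contain at least one vertex, and that single vertex could be precisely $v$; the appeal to ``$n$ large'' does not help, since the bound at that step does not grow with $n$, and your closing remark that strictness ``still leaves at least one choice'' overlooks that this one choice may be the forbidden vertex. However, this avoidance is not needed: the proposition only asks that $K$ and $K'$ share at least one vertex, and nothing prevents $v \in K$, $u \in K'$, or any further overlap between $K$ and $K'$. Dropping the constraint (simply take the $V_2$-vertex of $K$ to be $v$ if it is the only option, and likewise for $K'$) removes the gap entirely, and the argument then coincides with the proof in the paper.
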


\begin{proof}
Let $V_1, \dots, V_r$ be the vertex classes of $G$, and assume without loss of generality that $u, v \notin V_r$. Since $r \geq 3$ we have $\delta^*(G) > n/2$, so we may fix a common neighbour $w$ of $u$ and $v$ in $V_r$. It then suffices to extend $\{u, w\}$ and $\{v, w\}$ to copies of $K_r$ in $G$, and we may do this greedily. Indeed, any set $S$ of $j$ vertices of $G$ has at least $n-j(n-\delta^*(G)) > n - \frac{jn}{r-1}$ common neighbours in each vertex class not intersected by $S$, and in forming a copy of $K_r$ we choose each vertex to be a common neighbour of at most $r-1$ previously-chosen vertices, so there is always a common neighbour available.
\end{proof}

Observe that the statement of Proposition~\ref{prop:meetingKrs} does not hold for $r=2$, as then $G$ need not be connected. This is the fundamental reason for the different behaviour of Theorem~\ref{theo:degalpha} compared to Theorem~\ref{BZ:degalpha} (in which the greatest common divisor of the sizes of connected components plays a role).

\subsection{Completing the tiling} 
At the end of the proof of Theorem~\ref{theo:combined}, all the remaining vertices of our graph $G$ lie in vertex-disjoint $r$-partite subgraphs $G''$ of $G$ whose vertex classes are pairwise super-regular with positive density. We then complete the $H$-tiling of $G$ by finding a~perfect $H$-tiling of each $G''$. For this it would be natural to arrange that the sizes of the $r$ vertex classes of $G''$ are in the ratio $b:b:\dots:b:a$, so that the proportion of vertices of $G''$ in the smallest vertex class is the same as the proportion of vertices of $H$ in the smallest vertex class.
But it is to our advantage to ensure that the smallest class of $G''$ actually has a slightly larger proportion of the vertices. Indeed, Proposition~\ref{completetiling} below guarantees that such a distribution of sizes (together with certain divisibility assumptions) ensures a perfect $H$-tiling in the complete $r$-partite graph $G'$ with the same vertex class sizes. This is enough for the Blow-up Lemma to ensure that $G''$ also admits a perfect $H$-tiling.

\begin{prop}[\cite{M}, Corollary~6.13] \label{completetiling}
Let $H$ be a graph on $h$ vertices with $\chi(H) = r \geq 3$ and $\sigma(H) < \frac{1}{r}$. Then for any $\alpha > 0$ there exist $\beta = \beta(\alpha,H) > 0$ and $n_0 = n_0(\alpha,H)$ such that the following statement holds.
 
Let $G'$ be a complete $r$-partite graph on $n \geq n_0$ vertices with vertex classes $V_1, \dots, V_r$, where $|V_1| \leq |V_2|, \cdots, |V_r|$. Suppose also that
\begin{enumerate}[(1)]
\item $\sigma(G') \geq \sigma(H) + \alpha$,
\item $\big||V_i| - |V_j|\big| \leq \beta n$ for any $2 \leq i, j \leq r$, and
\item $rh\cdot\gcd(H)$ divides $|V_j|$ for each $j \in [r]$.
\end{enumerate}
Then $G'$ admits a perfect $H$-tiling.
\end{prop}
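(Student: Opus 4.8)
The plan is to translate the existence of a perfect $H$-tiling of the complete $r$-partite graph $G'$ into a question about a single integer vector lying in the cone and the lattice generated by a finite set of integer vectors, and to answer that question by combining a lattice-membership argument with a cone-membership argument carrying a margin linear in $n$. For the reformulation: given a proper $r$-colouring $\phi \in \C$ of $H$ with colour classes $X_1^\phi, \dots, X_r^\phi$ and a bijection $\pi \in \Permute([r])$, call an injective edge-preserving map from $H$ into $G'$ that sends each $X_i^\phi$ into $V_{\pi(i)}$ a \emph{$(\phi,\pi)$-copy} of $H$; since $G'$ is complete $r$-partite every such map is a copy of $H$, and since $\chi(H) = r$ every copy of $H$ in $G'$ arises this way. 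A $(\phi,\pi)$-copy uses the vector $v_{\phi,\pi} := \sum_{i \in [r]} |X_i^\phi|\, e_{\pi(i)} \in \Z^r$ of vertices (with $e_1,\dots,e_r$ the standard basis of $\R^r$). Setting $S := \{v_{\phi,\pi} : \phi \in \C,\ \pi \in \Permute([r])\}$ -- a finite set, invariant under permuting coordinates, each of whose members has coordinate-sum $h$ and all coordinates at least $\sigma(H)h$ -- a short greedy argument (which succeeds in any order, since the governing equation always leaves enough free vertices in each class) shows that $G'$ admits a perfect $H$-tiling exactly when $\nb := (|V_1|, \dots, |V_r|)$ is a non-negative integer combination of the members of $S$.

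Next I would check that $\nb$ lies in the lattice $L := \Z\textrm{-span}(S)$. The $r$ cyclic placements of the colour-class-size vector of any fixed colouring sum to $h\1$, so $h\1 \in L$; and since $\sigma(H) < \tfrac1r$ the integer $d := \gcd(H)$ is finite, so forming integer combinations of the `swap differences' $\delta(e_i - e_j)$ (a $(\phi,\pi)$-copy minus a $(\phi,\pi')$-copy that interchanges two colour classes whose sizes differ by $\delta \in \D(H)$) realises $d(e_i - e_j) \in L$ for all $i \neq j$, whence $\{x \in d\Z^r : \sum_i x_i = 0\} \subseteq L$. Condition~(3) forces $rhd \mid |V_j|$ for every $j$, hence $rhd \mid n = \sum_j |V_j|$, so $\tfrac nr \1 = \tfrac{n}{rh}\cdot h\1 \in L$ while $\nb - \tfrac nr\1 \in \{x \in d\Z^r : \sum_i x_i = 0\} \subseteq L$; adding, $\nb \in L$.

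I would then show that, for $\beta = \beta(\alpha,H)$ chosen small enough, $\nb$ lies in $\mathrm{int}\,\mathrm{PosCone}(S)$ at distance at least $\eta n$ from its boundary, for some $\eta = \eta(\alpha, H) > 0$. Since all members of $S$ have coordinate-sum $h$, this is equivalent to $\nb/n$ lying in the relative interior of the polytope $P := \mathrm{conv}(S/h)$ inside the simplex $\{x \geq 0,\ \sum_i x_i = 1\}$ with a uniform margin. The barycentre of $P$ is $\tfrac1r\1$ (by symmetry) and lies in the relative interior of $P$ (which is not a single point, as $\sigma(H) < \tfrac1r$); the point $u^* := (\sigma(H), \tfrac{1-\sigma(H)}{r-1}, \dots, \tfrac{1-\sigma(H)}{r-1})$ lies in $P$ and on the supporting hyperplane $\{x_1 = \sigma(H)\}$, hence on $\partial P$, and the segment from $\tfrac1r\1$ to $u^*$ is exactly $\{(s,t,\dots,t) : \sigma(H) \leq s \leq \tfrac1r,\ t = \tfrac{1-s}{r-1}\}$. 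By conditions~(1) and~(2), $\nb/n$ is within $\beta$ in $\ell_\infty$-distance of the point $(s,t,\dots,t)$ with $s = |V_1|/n \in [\sigma(H)+\alpha,\,\tfrac1r]$; lying strictly between the relative interior point $\tfrac1r\1$ and the boundary point $u^*$, this point is in the relative interior of $P$, and by compactness of the range of $s$ its distance to $\partial P$ is bounded below by a positive function of $\alpha$ and $H$, so choosing $\beta$ below half that bound yields the claimed margin. Finally I would close using the standard fact that for a finite $S \subseteq \Z^r$ there is $D = D(S)$ such that any $\nb \in \Z\textrm{-span}(S) \cap \mathrm{PosCone}(S)$ at distance at least $D$ from $\partial\,\mathrm{PosCone}(S)$ is a non-negative integer combination of $S$ -- proved by taking a real representation of $\nb$ all of whose coefficients are at least $D/\|\sum_{s \in S} s\|$ (available from the distance hypothesis) and rounding it to a nearby integer representation, the rounding error being at most the covering radius of the lattice $\{x \in \Z^S : \sum_s x_s s = 0\}$, which depends only on $S$ -- and then taking $n_0$ so that $\eta n_0 \geq D(S)$, which depends only on $H$.

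I expect the cone-membership step to be the main obstacle: one must understand the polytope $P = \mathrm{conv}(S/h)$ well enough to extract a \emph{quantitative} lower bound on the distance from $\nb/n$ to $\partial P$, depending only on $\alpha$ and $H$ (so as to fix $\beta$, and hence $n_0$, correctly), rather than mere relative-interior membership. The lattice bookkeeping -- confirming that the factor $rh\gcd(H)$ in condition~(3) is precisely what forces $\nb \in L$ -- is routine, but needs to be carried out with care.
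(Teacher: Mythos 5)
Your proposal is essentially correct, but it is worth noting that the paper itself does not prove this statement at all: Proposition~\ref{completetiling} is imported verbatim from~\cite{M} (Corollary~6.13 there, stated for $r$-partite $r$-uniform hypergraphs), so what you have produced is a self-contained substitute rather than a reconstruction of an argument appearing in this paper. Your route --- encode each copy of $H$ in the complete $r$-partite graph by its vector of class-usages $v_{\phi,\pi}$, observe that a perfect tiling exists iff the class-size vector is a non-negative \emph{integer} combination of these vectors, verify lattice membership from condition (3) via $h\1$ and $\gcd(H)\,(e_i-e_j)$, verify cone membership with a margin linear in $n$ from conditions (1)--(2), and finish with the standard ``lattice point deep in the cone lies in the semigroup'' rounding lemma --- is sound, and all the key checks go through: the greedy equivalence is valid because the complete multipartite structure makes any correctly-sized distribution of vertices a copy of $H$; since $\sigma(H)<\tfrac1r$ the set $\D(H)$ is nonzero, so $\gcd(H)$ is finite, the differences $\gcd(H)(e_i-e_j)$ really do lie in the lattice, and (together with $h\1$) they show the cone is full-dimensional, so that the class-size vector, having coordinate sum $n$, automatically lies in the affine hull of your polytope $P$ and your ``close to a relative-interior point implies inside with margin'' step is legitimate; the point $(\sigma(H),\tfrac{1-\sigma(H)}{r-1},\dots,\tfrac{1-\sigma(H)}{r-1})$ is indeed in $P$ (average a minimizing colouring over permutations fixing its smallest class), and compactness of $s\in[\sigma(H)+\alpha,\tfrac1r]$ gives a margin $\eta(\alpha,H)$ which correctly dictates $\beta$ and then $n_0$. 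The only places needing care in a full write-up are routine: the norm bookkeeping converting $\ell_\infty$-closeness within the hyperplane $\{\sum_i x_i=1\}$ into distance from the boundary of $\mathrm{PosCone}(S)$ after rescaling by $n$, and a clean statement/proof of the rounding lemma (your covering-radius argument works because the real kernel of an integer matrix has a rational basis, so the integer kernel is a full-rank sublattice). Compared with the treatment in~\cite{M}, which proceeds through that paper's machinery for multipartite hypergraph packings, your linear-algebraic argument is arguably more elementary and has the merit of making explicit exactly why the divisibility constant $rh\cdot\gcd(H)$ in condition (3) and the slack $\alpha$ in condition (1) are what is needed.
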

Proposition~\ref{completetiling} is also taken from~\cite{M}, where it was stated for $r$-partite $r$-uniform hypergraphs $H$; here it is easy to see that the $r$-partite graph form is identical, since the problem of tiling a complete $r$-partite $r$-uniform hypergraph $G'$ with copies of a smaller $r$-partite $r$-uniform hypergraph $H$ is identical to the problem of tiling a complete $r$-partite graph $G'$ with copies of a smaller $r$-partite graph $H'$.

\subsection{Tidying up atypical vertices} 
In the proof of Theorem~\ref{theo:combined} we will encounter `bad' vertices in $G$ which have atypical neighbourhoods. At an early stage in the proof we will greedily remove each such vertex $v$ from $G$ by deleting a copy of $H$ in $G$ which contains $v$. The following proposition shows that the degree condition of Theorem~\ref{theo:combined} is (more than) strong enough to ensure that this is possible.

\begin{prop} \label{deletevertex}
Let $H$ be a graph on $h$ vertices with $\chi(H) = r \geq 3$ and $\gcd(H) = 1$. For any $\alpha > 0$, there exists $n_0=n_0(\alpha,H)$ such that the following statement holds.
 
Let $G$ be a balanced $r$-partite graph on $rn$ vertices such that $n \geq n_0$ and $\delta^*(G) \geq \frac{r-2}{r-1}n + \alpha n$. Then, for any vertex $v \in V(G)$, there is a copy of $H$ in $G$ which contains $v$.
\end{prop}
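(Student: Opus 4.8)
The plan is to find inside $G$ a copy of the complete $r$-partite graph with all $r$ vertex classes of size $h$, one of whose classes contains $v$. Since $\chi(H)=r$, such a complete multipartite graph contains a copy of $H$ using $v$ (map the colour classes of a proper $r$-colouring of $H$ injectively into its classes, arranging that $v$ lies in the image), so this suffices. Fix the vertex classes $V_1,\dots,V_r$ of $G$ with, without loss of generality, $v\in V_r$; the task becomes to find $U_1\subseteq V_1,\dots,U_{r-1}\subseteq V_{r-1}$ and $Z\subseteq V_r$, each of size $h$, with $v\in Z$, such that $G[U_1\cup\dots\cup U_{r-1}\cup Z]$ is complete $r$-partite with these classes.

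First I would count copies of $K_r$ through $v$. A routine greedy argument shows that $\{v\}$ extends to at least $\prod_{j=1}^{r-1}\big(n-j(n-\delta^*(G))\big)$ copies of $K_r$ in $G$: if $v$ together with one vertex from each of $V_1,\dots,V_{j-1}$ already forms a clique, the number of common neighbours available for the next vertex in $V_j$ is at least $n-j(n-\delta^*(G))\ge n-j\big(\tfrac{n}{r-1}-\alpha n\big)$, which is positive for every $j\le r-1$ — the extra $\alpha n$ in the degree hypothesis is exactly what keeps this positive at the critical step $j=r-1$. Consequently, if for $z\in V_r$ we let $\mathcal{L}_z$ denote the set of $(r-1)$-tuples $(w_1,\dots,w_{r-1})\in V_1\times\dots\times V_{r-1}$ that span a $K_{r-1}$ in $G$ all of whose vertices are adjacent to $z$, then $|\mathcal{L}_v|=\Omega(n^{r-1})$, where here and below the implied constants depend only on $\alpha$ and $H$.

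Next I would choose vertices $z_1=v,z_2,\dots,z_h\in V_r$ greedily so that $\mathcal{F}_k:=\mathcal{L}_{z_1}\cap\dots\cap\mathcal{L}_{z_k}$ stays of size $\Omega(n^{r-1})$. The point is that for any fixed $(w_1,\dots,w_{r-1})\in\mathcal{F}_k$ the number of $z\in V_r$ adjacent to all of $w_1,\dots,w_{r-1}$ is at least $n-(r-1)(n-\delta^*(G))\ge(r-1)\alpha n$, so $\sum_{z\in V_r}|\mathcal{F}_k\cap\mathcal{L}_z|\ge(r-1)\alpha n\,|\mathcal{F}_k|$, and hence (the $z\in\{z_1,\dots,z_k\}$ contributing negligibly for $n$ large) some $z_{k+1}\in V_r\setminus\{z_1,\dots,z_k\}$ satisfies $|\mathcal{F}_{k+1}|\ge\tfrac12(r-1)\alpha\,|\mathcal{F}_k|$. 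After $h-1$ steps this yields $Z:=\{z_1,\dots,z_h\}$ with $|\mathcal{F}_h|=\Omega(n^{r-1})$. Since $\mathcal{F}_h$ is an $(r-1)$-partite $(r-1)$-uniform hypergraph on vertex classes of size at most $n$ with $\Omega(n^{r-1})$ edges, a classical theorem of Erd\H{o}s on Tur\'an numbers of complete multipartite uniform hypergraphs yields, for $n$ large enough in terms of $\alpha$ and $H$, a copy of the complete $(r-1)$-partite $(r-1)$-uniform hypergraph with all classes of size $h$; call its classes $U_1,\dots,U_{r-1}$. Every transversal of $U_1\times\dots\times U_{r-1}$ then lies in $\mathcal{F}_h$, hence spans a $K_{r-1}$ in $G$ all of whose vertices are adjacent to every vertex of $Z$; considering transversals through any two prescribed vertices shows that $G[U_1\cup\dots\cup U_{r-1}\cup Z]$ is precisely the complete $r$-partite graph with classes $U_1,\dots,U_{r-1},Z$, which contains a copy of $H$ through $v$, as required.

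The only step requiring real care is the interleaving above. Because $n-\delta^*(G)$ is comparable to $n$, a fixed bounded set of vertices of $V_1\cup\dots\cup V_{r-1}$ will in general fail to lie in the neighbourhood of all but a vanishing proportion of the vertices of $V_r$; one therefore cannot first build the complete $(r-1)$-partite graph on $U_1,\dots,U_{r-1}$ and only afterwards look for $h$ vertices of $V_r$ joined to all of it. Selecting $Z$ first, while maintaining a family of $\Omega(n^{r-1})$ copies of $K_{r-1}$ in the common neighbourhood of $Z$, and extracting $U_1,\dots,U_{r-1}$ only at the end, avoids this difficulty; the remaining ingredients are the greedy counting above and the standard hypergraph Tur\'an estimate. (Note that the hypothesis $\gcd(H)=1$ is not needed for this argument.)
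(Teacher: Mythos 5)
Your argument is correct, but it takes a genuinely different route from the paper: the paper gives no proof of Proposition~\ref{deletevertex} at all, asserting only that it is a straightforward application of Szemer\'edi's Regularity Lemma (implicit in \cite{BZ,KO2,MS}), whereas you give a self-contained elementary proof with no regularity machinery. Your chain of reasoning checks out: the greedy count gives $|\mathcal{L}_v|\geq (cn)^{r-1}$ since each factor $n-j(n-\delta^*(G))$ is at least $\min\{n/(r-1),(r-1)\alpha n\}$ for $j\leq r-1$; the double count $\sum_{z\in V_r}|\mathcal{F}_k\cap\mathcal{L}_z|\geq (r-1)\alpha n|\mathcal{F}_k|$ is valid because every tuple of $\mathcal{F}_k$ has at least $n-(r-1)(n-\delta^*(G))\geq(r-1)\alpha n$ common neighbours in $V_r$, so the greedy selection of $z_2,\dots,z_h$ keeps $|\mathcal{F}_h|=\Omega(n^{r-1})$; and Erd\H{o}s's bound $\mathrm{ex}(N;K^{(k)}(t,\dots,t))=O(N^{k-1/t^{k-1}})$ then extracts the complete $(r-1)$-partite $(r-1)$-uniform subhypergraph for $n$ large. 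The only detail worth making explicit is that the classes of the extracted subhypergraph really do align with $V_1,\dots,V_{r-1}$: if a class contained $x\in V_a$ and $y\in V_b$ with $a\neq b$, swapping $x$ for $y$ in an edge through $x$ would produce an edge of $\mathcal{F}_h$ missing $V_a$, contradicting that all edges are transversals (alternatively, quote the partite form of Erd\H{o}s's theorem). With that said, your observation that the interleaved selection of $Z$ is necessary (since bounded subsets of $V_1\cup\dots\cup V_{r-1}$ need not have many common neighbours in $V_r$ when $n-\delta^*(G)=\Theta(n)$) is exactly the right point of care, and your remark that $\gcd(H)=1$ is never used is also accurate. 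What your approach buys is independence from the regularity lemma and an explicit, quantitative construction of a complete $r$-partite $K(h,\dots,h)$ through $v$; what the paper's suggested route buys is brevity in context, since the regularity framework is already set up for the main proof.
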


We omit any proof of Proposition~\ref{deletevertex} in that it is a straightforward application of Szemer\'edi's Regularity Lemma and is implicit in many papers, including~\cite{BZ,KO2,MS}.

\subsection{The regularity method} 
We use a variant of Szemer\'edi's Regularity Lemma.  Before we can state it, we need a few basic definitions. For disjoint vertex sets $A$ and $B$ in some graph, let $e(A,B)$ denote the number of edges with one endpoint in $A$ and the other in $B$.  Further, let the \textit{density} of the pair $(A,B)$ be $d(A,B)=e(A,B)/|A||B|$. We say that the pair $(A,B)$ is \textit{$\eps$-regular} if $X\subseteq A$, $Y\subseteq B$, $|X|\geq\eps |A|$, and $|Y|\geq\eps|B|$ imply $|d(X,Y)-d(A,B)|\leq\eps$, and likewise that a pair $(A,B)$ is \textit{$(\eps, \delta)$-super-regular} if $(A, B)$ is $\eps$-regular and also $\deg_B(a)\geq\delta |B|$ for all $a\in A$ and $\deg_A(b)\geq\delta |A|$ for all $b\in B$.

The degree form of Szemer\'edi's Regularity Lemma (see, for instance, \cite[Theorem 1.10]{KS}) is sufficient here, modified for the multipartite setting.

\begin{theo}\label{thm:SzemRegLem}
For every integer $r \geq 2$ and every $\eps>0$, there is an $M=M(r,\eps)$ such that if $G=(V_1,\ldots,V_r;E)$ is a balanced $r$-partite graph on $rn$ vertices and $d\in[0,1]$ is any real number, then there exist integers $\ell$ and $L$, a spanning subgraph $G'=(V_1,\ldots,V_r;E')$ and, for each  $i=1,\ldots,r$, a partition of $V_i$ into clusters $V_i^{0},V_i^{1},\ldots,V_i^{\ell}$ with the following properties.
\begin{enumerate}[(P1)]
   \item $\ell\leq M$,
   \item $|V_i^{0}|\leq \eps n$ for $i\in [r]$,
   \item $|V_i^{j}|=L\leq\eps n$ for $i\in [r]$ and $j\in [\ell]$,
   \item $\deg_{G'}(v,V_{i'})>\deg_G(v,V_{i'})-(d+\eps)n$ for all $v\in V_i$, $i\neq i'$, and
   \item all pairs $(V_i^{j},V_{i'}^{j'})$, $i,i'\in [r]$, $i\neq i'$, $j,j'\in [\ell]$, are $\eps$-regular in $G'$, each with density either $0$ or exceeding $d$. \label{it:P5}
\end{enumerate}
\end{theo}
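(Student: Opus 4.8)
This is a standard variant of Szemer\'edi's Regularity Lemma, and the plan is to obtain it from the degree form \cite[Theorem~1.10]{KS} by a by-now-routine argument, the only delicate point being property~(P4).

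First I would fix an auxiliary constant $\eps_0=\eps_0(\eps,r)$ with $\eps_0\ll\eps,1/r$ and apply the degree form to $G$ --- regarded as a graph on $rn$ vertices --- relative to the initial partition $\{V_1,\dots,V_r\}$, with error parameter $\eps_0$ and density parameter $d+\eps_0$. Since the degree form, like Szemer\'edi's original lemma, may be applied relative to a prescribed initial partition, the clusters it produces lie inside the $V_i$: one obtains a spanning subgraph $\hat G\subseteq G$, an exceptional set $W_0$ with $|W_0|\le\eps_0 rn$, and clusters $W_1,\dots,W_k$ with $k\le M_0(\eps_0,r)$, each of a common size $\hat L\le\eps_0 rn$ and each contained in some $V_i$, such that every pair of distinct clusters is $\eps_0$-regular with density either $0$ or exceeding $d+\eps_0$, and each vertex loses at most $(d+2\eps_0)rn$ of its $G$-edges in passing to $\hat G$ (one may also assume, as usual, that each cluster lies in at most $\sqrt{\eps_0}\,k$ irregular pairs).

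Two cosmetic adjustments follow. To make the classes contain equally many clusters, set $\ell:=\min_i\ell_i$, where $\ell_i$ is the number of clusters inside $V_i$, and relegate the $\ell_i-\ell$ surplus clusters of each $V_i$ to an exceptional set; since $\ell_i\hat L=|V_i|-|V_i\cap W_0|=n\pm\eps_0 rn$, the $\ell_i$ differ pairwise by at most $\eps_0 rn/\hat L$, so this costs at most $\eps_0 rn$ vertices per class. Then trim each remaining cluster to a common size $L$, relegating the trimmings as well (at most $\eps_0 rn$ vertices in total, and each surviving cluster still occupies at least half of its parent $W_a$). Write $V_i^1,\dots,V_i^\ell$ for the resulting clusters, $V_i^0$ for the remainder of $V_i$, and $M:=M_0$, and let $G'$ consist of the edges of $\hat G$ together with all edges of $G$ incident to $\bigcup_iV_i^0$. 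For $\eps_0$ small enough in terms of $\eps$ and $r$ this gives (P1)--(P3); and (P5) holds because any $(V_i^j,V_{i'}^{j'})$ with $j,j'\in[\ell]$ is a subpair of an $\eps_0$-regular pair of clusters with each side occupying at least half of it --- hence $2\eps_0$-regular, so $\eps$-regular --- with density in $G'$ either $0$ or exceeding $(d+\eps_0)-\eps_0=d$.

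The remaining point --- and the main obstacle --- is (P4). For $v\in V_i^0$ it is immediate, as $G'$ retains every $G$-edge at $v$. For $v\in V_i^j$ with $j\ge1$, say $v\in W_a$, the difficulty is that the degree form bounds only the \emph{total} number of edges $v$ loses, while (P4) asks for a bound on the loss into each individual $V_{i'}$; so I would re-derive the estimate one class at a time. Every $G$-edge from $v$ to $V_{i'}$ that is absent from $G'$ lies in a ``bad'' pair $(W_a,W_b)$ with $W_b\subseteq V_{i'}$ a surviving cluster, where bad means $\eps_0$-irregular or of $G$-density at most $d+\eps_0$; the irregular pairs account for at most $\sqrt{\eps_0}\,k\hat L\le\sqrt{\eps_0}\,rn$ such edges. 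For the low-density pairs, $\eps_0$-regularity of $(W_a,W_b)$ guarantees only that \emph{most} vertices of $W_a$ send at most $(d+2\eps_0)\hat L$ edges to $W_b$, so a single vertex could a priori lose far too many edges; but summing over the at most $\ell\le n/\hat L$ surviving clusters in $V_{i'}$ shows that $W_a$ contains at most $\eps_0 n$ ``heavy incidents'' towards $V_{i'}$, hence fewer than $\sqrt{\eps_0}\,\hat L$ vertices that are heavy towards more than $\sqrt{\eps_0}\,(n/\hat L)$ of those clusters. Relegating all such vertices, over all $a$ and all target classes, to the sets $V_i^0$ --- a further $O(\sqrt{\eps_0})\,rn$ vertices, which one folds into the trimming step so that (P1)--(P3) are unaffected --- leaves every surviving $v\in V_i^j$ losing at most $\sqrt{\eps_0}\,(n/\hat L)\hat L+\ell(d+2\eps_0)\hat L=(d+O(\sqrt{\eps_0}))n$ edges into $V_{i'}$ through low-density pairs. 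Hence the total loss into $V_{i'}$ is at most $(d+\eps)n$ for $\eps_0$ chosen small enough, which is (P4). Everything else is bookkeeping, namely tracking the hierarchy $\eps_0\ll\eps,1/r$ through the three steps.
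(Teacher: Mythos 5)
The paper does not actually prove Theorem~\ref{thm:SzemRegLem}: it is quoted as the degree form of the Regularity Lemma \cite[Theorem~1.10]{KS} ``modified for the multipartite setting'', so there is no in-paper argument to compare against. Your derivation is the natural one and is correct in outline: you rightly identify that (P1)--(P3) and (P5) follow from routine refinement/equalisation plus the Slicing Lemma, and that the only genuinely new point is (P4), since the degree form controls the total loss $(d+\eps)rn$ at each vertex while (P4) demands a loss of at most $(d+\eps)n$ into \emph{each} class $V_{i'}$. Your fix --- using regularity of low-density pairs to bound the per-cluster loss for all but $\eps_0\hat L$ vertices, counting heavy incidences, and relegating the few vertices that are heavy towards many clusters to $V_i^0$ (whose incident edges $G'$ retains, so (P4) is trivial there) --- is exactly the right mechanism, and the arithmetic checks out under $\eps_0\ll\eps,1/r$.

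One adjustment is needed in how the argument is anchored. As written you apply the degree form as a black box, yet your (P4) analysis uses facts that are not consequences of its statement but of its proof: that every edge of $G\setminus G'$ between surviving clusters lies in a pair that is $\eps_0$-irregular \emph{in $G$} or has \emph{$G$-density} at most $d+\eps_0$ (in the degree-form output all cluster pairs are regular in $\hat G$, and nothing is said about which $G$-edges were removed), and the parenthetical ``each cluster lies in at most $\sqrt{\eps_0}\,k$ irregular pairs'', which the lemma does not guarantee (it bounds only the total number, $\eps_0 k^2$, of irregular pairs). The clean way to present this is to apply the original Regularity Lemma to $G$ with initial partition $\{V_1,\dots,V_r\}$ and define $G'$ yourself by the standard deletions (edges in irregular pairs, edges in pairs of $G$-density at most $d+\eps_0$, keeping all edges at the exceptional sets); the per-cluster irregularity bound is then obtained by a Markov argument relegating the at most $2\sqrt{\eps_0}k$ clusters lying in more than $\sqrt{\eps_0}k$ irregular pairs to the exceptional sets, in the same spirit as your treatment of heavy vertices. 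With that rephrasing --- which changes none of your estimates --- the proof is sound.
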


The final step in the proof of Theorem~\ref{theo:combined} is to apply the Blow-up Lemma of  Koml\'os, S\'ark\"ozy, and Szemer\'edi~\cite{KSSz97} in the following form. 

\begin{theo}[Blow-up Lemma]\label{theo:blow-up}
For any integers $r$ and $\Delta$ and any $\delta > 0$ there exist $\eps = \eps(r, \Delta, \delta) > 0$ and $N_0 = N_0(r, \Delta, \delta)$ such that the following holds for any integer $N \geq N_0$ and any graph $R$ on vertex set~$[r]$. 

Let $V_1, \dots, V_r$ be pairwise-disjoint sets each of size $N$, and set $V = \bigcup_{i \in [r]} V_i$. Let $K$ be the graph on vertex set $V$ in which $(V_i, V_j)$ is a~complete bipartite graph for $ij \in E(R)$ (and which has no other edges than these). Also let $G$ be any graph on $V$ in which $(V_i, V_j)$ is $(\eps, \delta)$-super-regular for any $ij \in E(R)$. Then for any graph $H$ with maximum degree $\Delta(H) \leq \Delta$, if $H$ can be embedded in $K$, then $H$ can also be embedded in $G$.
\end{theo}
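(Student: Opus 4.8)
We may assume, after adding isolated vertices to $H$ if necessary (which preserves $\Delta(H)\le\Delta$ and the embeddability of $H$ in $K$, and only makes the conclusion harder), that there is a map $\sigma\colon V(H)\to[r]$ with $|\sigma^{-1}(i)|=N$ for every $i$ and with $\sigma(u)\sigma(v)\in E(R)$ whenever $uv\in E(H)$; the task is then to find an injection $\phi\colon V(H)\to V$ with $\phi(v)\in V_{\sigma(v)}$ for all $v$ and $\phi(u)\phi(v)\in E(G)$ for all $uv\in E(H)$. Following Koml\'os, S\'ark\"ozy and Szemer\'edi~\cite{KSSz97}, the plan has three phases: (1)~set aside a small \emph{buffer} $B\subseteq V(H)$; (2)~embed $V(H)\sm B$ by a randomised greedy algorithm, using super-regularity to keep ``candidate sets'' large; (3)~finish by embedding $B$ into the leftover vertices via a perfect matching. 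Fix constants $\eps\ll c\ll\delta,1/\Delta$ and a small $\rho=\rho(c,\delta,\Delta)>0$, with $\eps$ negligible against everything else.

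For phase~(1), since $\Delta(H)\le\Delta$ a short greedy argument yields an $H$-independent set $B$ with $|B\cap\sigma^{-1}(i)|=cN$ for every $i$: independence makes phase~(3) a pure matching problem, and smallness keeps the candidate sets of $B$-vertices under control during phase~(2). In phase~(2) we embed $V(H)\sm B$ in an arbitrary order $v_1,\dots,v_m$, at step $t$ choosing $\phi(v_t)$ uniformly at random from the candidate set $C_t$ of currently-unused $w\in V_{\sigma(v_t)}$ adjacent in $G$ to $\phi(v_s)$ for all $s<t$ with $v_sv_t\in E(H)$. The invariant, maintained with high probability, is that for every still-unembedded $u$ (buffer vertices included) the set of currently-unused vertices of $V_{\sigma(u)}$ adjacent to all already-embedded $H$-neighbours of $u$ has size $\ge\rho N$. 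This set shrinks for two reasons. Embedding an $H$-neighbour of $u$ intersects it with that neighbour's $G$-neighbourhood: by super-regularity the first such intersection still has $\ge\delta N$ vertices in the relevant class, and by $\eps$-regularity each subsequent one costs only a bounded factor while remaining of size $\ge\eps N$; since $u$ has $\le\Delta$ neighbours, and at the step $\phi(v_t)$ is chosen only $\le\Delta$ such sets are affected (each forbidding $\le\eps N$ ``atypical'' choices of $\phi(v_t)$), the bound $|C_t|\ge\rho N\gg\Delta\eps N$ always leaves a safe choice. Vertices also get used up; here an exposure-martingale argument (each image chosen uniformly from a large set) with Azuma's inequality shows that the used set stays quasirandomly spread, so intersecting with the unused vertices costs only about its expected proportion. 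When phase~(2) ends, exactly the $cN$ vertices of each class reserved for $B$ remain unused.

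In phase~(3), for each $i$ we must embed $B_i:=B\cap\sigma^{-1}(i)$ bijectively onto the set $U_i$ of remaining unused vertices of $V_i$ (note $|U_i|=|B_i|=cN$), sending each $b$ into its final candidate set $D(b)\subseteq U_i$. Since $B$ is $H$-independent these constraints do not interact, so it suffices, for each $i$, to find a perfect matching in the bipartite graph on $(B_i,U_i)$ joining $b$ to $D(b)$; by Hall's theorem this exists once $\big|\bigcup_{b\in S}D(b)\big|\ge|S|$ for every $S\subseteq B_i$. The invariant of phase~(2) gives $|D(b)|\ge\rho N$, and this suffices provided the family $\{D(b)\}_{b\in B_i}$ is ``well spread'' inside $U_i$ (not all clustered in a small subset); that this holds — the unused vertices meeting each $D(b)$ in close to the expected fraction — is exactly what the martingale concentration of phase~(2) is set up to deliver.

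The genuinely hard part is this verification of Hall's condition: one needs to control $\big|\bigcup_{b\in S}D(b)\big|$ \emph{simultaneously} over all (exponentially many) $S\subseteq B_i$, where the sets $D(b)$ are themselves random, having been carved out during phase~(2). Making the concentration strong enough to survive this union bound is precisely why $\eps$ must be chosen astronomically small relative to $\rho$ and $c$, and it forces care in the order in which the random choices are revealed. One must also accommodate the small-probability event that phase~(2) breaks its invariant, either by conditioning on its (positive-probability) complement or by a deterministic local re-embedding that spends a few spare buffer vertices. The remaining ingredients — the construction of $B$, the per-step (super-)regularity estimate, and the reduction of phase~(3) to bipartite matching — are routine once the hierarchy of constants is in place.
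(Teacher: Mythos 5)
You should first note that the paper does not prove Theorem~\ref{theo:blow-up} at all: it is the Blow-up Lemma of Koml\'os, S\'ark\"ozy and Szemer\'edi, imported verbatim from~\cite{KSSz97} and used as a black box. So there is no in-paper argument to compare against; for the purposes of this paper the intended ``proof'' is the citation, and your sketch is an attempt to reprove the cited theorem. As a roadmap of the original proof your outline is faithful: the three phases (buffer selection, randomised greedy embedding with candidate sets, K\H{o}nig--Hall finish on the leftover vertices) are exactly the architecture of~\cite{KSSz97}.

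As a proof, however, it has genuine gaps, and you partly flag them yourself. First, the claim that the phase-(2) invariant ``$|C_t|\ge\rho N$ for every unembedded vertex, with high probability'' can be maintained by regularity alone is not correct as stated: $\varepsilon$-regularity only controls densities between sets of size at least $\varepsilon N$, and for a vertex several of whose $H$-neighbours have already been embedded the candidate set can legitimately drop below any fixed proportion for a small exceptional set of vertices. The actual proof needs a priority/queue mechanism (vertices whose candidate sets become dangerously small are embedded immediately, and one must bound how many such vertices arise) together with a separate treatment of exceptional image vertices in the final matching phase; your sketch omits this entirely. Second, the verification of Hall's condition --- simultaneous control of $\bigl|\bigcup_{b\in S}D(b)\bigr|$ over all $S\subseteq B_i$ with the sets $D(b)$ themselves random --- is named as ``the genuinely hard part'' but not carried out; asserting that a martingale argument ``is set up to deliver'' it is a placeholder, not an argument (the union bound over exponentially many $S$ is precisely where the original proof has to work hardest, using that most leftover vertices lie in many candidate sets). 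Finally, the buffer must be chosen not merely $H$-independent but so that the phase-(3) constraints genuinely decouple (in~\cite{KSSz97} buffer vertices are kept far apart in $H$), which your greedy construction does not address. None of this is needed for the present paper --- citing~\cite{KSSz97}, as the authors do, is the appropriate course --- but as a standalone proof of the statement your text is an accurate summary of the known strategy rather than a complete argument.
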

This essentially states that we may treat super-regular pairs as being complete for the sake of embedding bounded degree spanning subgraphs (such as a perfect $H$-tiling).

\section{Proof of Theorem~\ref{theo:combined}} \label{sec:proof}

We now give the full proof of Theorem~\ref{theo:combined}. Recall that $r=\chi(H) \geq 3$ and $h = |V(H)|$, and set $\sigma := \sigma(H)$, $a := \sigma h$ and $b := (1-\sigma)h/(r-1)$. 
Then $a, b$ and $\sigma$ are positive rational numbers with $h = a + (r-1)b$ and $\sigma \leq \tfrac{1}{r}$ (see~\eqref{eq:defsigma}). If $\sigma = \frac{1}{r}$ then $\chicr(H) = r$ by definition of $\chicr$ (see~\eqref{eq:defchicr}), so Theorem~\ref{theo:alonyuster} gives the theorem in this case. We may therefore assume that $\sigma < \tfrac{1}{r}$. Since both $\sigma$ and $\tfrac{1}{r}$ can be written as rationals with denominator $rh$ it follows that $\sigma \leq \tfrac{1}{r}-\tfrac{1}{rh}$, so $b-a \geq \tfrac{1}{r-1}$. Without loss of generality we assume that $\alpha$ is rational and that $\alpha \leq \tfrac{1}{rh}$. Introduce new constants $n_0, C, D, M, \eps, \eps', \beta, d$ with
$$ \tfrac{1}{n_0} \ll \tfrac{1}{C} \ll \tfrac{1}{D} \ll \tfrac{1}{M} \ll \eps \ll \eps' \ll \beta \ll d \ll \alpha, \tfrac{1}{r}, \tfrac{1}{h}.$$
Let $G$ be an $r$-partite graph whose vertex classes $V_1, \dots, V_r$ each have size $n \geq n_0$ and which satisfies 
$$\delta^*(G) \geq \left(1-\frac{1}{\chicr(H)}  + \alpha \right) n \stackrel{\eqref{eq:defchicr}}{=} \left(1 - \frac{1-\sigma}{r-1} + \alpha \right) n= \left(1-\frac{b}{h}\right) n + \alpha n.$$
We shall construct an $H$-tiling in $G$ covering all but at most $C$ vertices of $G$, or, if $\gcd(H) = 1$ and $h$ divides $rn$, a perfect $H$-tiling in $G$.

Define $a' := a + \frac{\alpha h}{2}$ and $b' := b - \frac{\alpha h}{2(r-1)}$, so $a'$ and $b'$ are rational numbers with $0 < a' \leq b'$ (the latter inequality follows from our assumption that $\alpha \leq \frac{1}{rh}$) and $ a' + (r-1)b' = a + (r-1)b = h$. Note also that 
$1 -\frac{b'}{h} \leq 1 - \frac{b}{h} + \frac{\alpha}{2(r-1)}  \leq 1 - \frac{b}{h} + \frac{\alpha}{2}$,
so 
\begin{equation} \label{eq:mindegG}
\delta^*(G) \geq \left(1 - \frac{b'}{h}\right) n + \frac{\alpha n}{2}.
\end{equation}

\medskip \noindent \emph{Step 1: Apply the Regularity Lemma and define the reduced graph $R$.}
We apply the Regularity Lemma (Theorem~\ref{thm:SzemRegLem}) to $G$, with $r$, $\eps$, $d$ and $M$ playing the same role there as here, to obtain integers $\ell$ and $L$, a spanning subgraph $G'$ of $G$ and a partition of each $V_i$ into clusters $V_i^0, V_i^1, \dots, V_i^\ell$ which satisfy properties (P1)-(P5). In particular, (P3) tells us that for any $i \in [r]$ and $j \in [\ell]$ the cluster $V_i^j$ has size $L$, so $(1-\eps)n/\ell \leq L \leq n/\ell$. We define the \emph{reduced graph} $R$ of $G'$ in a standard way: the vertices of $R$ are the clusters $V_i^j$ for $i \in [r]$ and $j \in [\ell]$, and the edges of $R$ are those $V_{i}^{j}V_{i'}^{j'}$ for which there is at least one edge of $G'$ between $V_i^j$ and $V_{i'}^{j'}$ (note that (P5) then implies that the pair $(V_i^j, V_{i'}^{j'})$ is $\eps$-regular with density at least $d$). So $R$ is $r$-partite with vertex classes of size $\ell$. Moreover, for any $i \neq j$, any vertex $v \in V_i$ has at least $\delta^*(G) - (d+\eps)n$ neighbours in $V_j$ by (P4). By (P2) at most $\eps n$ of these neighbours are in $V_j^0$, so $v$ has neighbours in at least $\tfrac{1}{L} \cdot (\delta^*(G) - (d+2\eps)n)$ of the clusters $V_j^1, \dots, V_j^\ell$. Since $L \leq n/\ell$, it follows from (\ref{eq:mindegG}) that 

\begin{align} \label{eq:mindegR}
\delta^*(R) & \geq \frac{\ell}{n}\left(\left(1 - \frac{b'}{h} \right)n + \frac{\alpha n}{2} - (d+2\eps)n \right) 
\geq \left(1 - \frac{b'}{h}\right)\ell. 
\end{align} 

\medskip \noindent\emph{Step 2: Obtain a perfect fractional $(a', b')$-weighted $K_r$-tiling $\T$ in~$R$.} 
This can be done immediately by applying Lemma~\ref{fractiling} to~$R$ (inequality (\ref{eq:mindegR}) tells us that the minimum degree condition is satisfied). Let $\K^+$ be the set of $(a', b')$-weighted rooted copies of $K_r$ of non-zero weight in $\T$, that is, $\K^+ = \{K \in \K_{a', b', r}(R) : w(K) > 0\}$. Also observe that $R$ has $r\ell \leq rM$ vertices, so the number of possibilities for the reduced graph $R$ is bounded by a function of $M$. For each possible $R$, Lemma~\ref{fractiling} would give us a perfect fractional $(a', b')$-weighted $K_r$-tiling of $R$ in which all weights are rational (see Remark~\ref{rem:rationalweights}). So, as observed in Section 3.2 from~\cite{MS}, there is a common denominator, bounded by a function of $M$, of all weights used in our perfect fractional $(a', b')$-weighted $K_r$-tilings for each possible reduced graph $R$. Since $1/D \ll 1/M$, we may assume that $D!$ is a multiple of this common denominator, and therefore that $w(K)D!$ is an integer for any $K \in \K^+$. In particular, $w(K) \geq 1/D!$ for every $K \in \K^+$.

\medskip \noindent\emph{Step 3: Partition the clusters $U_i$ into subclusters according to the fractional tiling $\T$.} For each $i \in [r]$ and $j \in [\ell]$ let $\K^+_{i, j}$ consist of all members of $\K^+$  which contain $V^j_i$. So each member of $\K^+$ appears in precisely $r$ of the sets $\K^+_{i, j}$. Also, since $\T$ is perfect, for any cluster $V^j_i$ we have
$$\sum_{K \in \K^+} w(K) \1_R(\{V^j_i\}) \cdot \1_{a', b', R}(K) = \sum_{K \in \K^+_{i,j}} w(K) \1_R(\{V^j_i\}) \cdot \1_{a', b', R}(K) = 1 . $$
Recall that $\1_{a', b', R}(K)$ is the vector where the entries are $a$ at the coordinate corresponding to the root, $b$ at the other vertices of $K$, and $0$ otherwise. So we may partition the cluster $V^j_i$ into parts ${V^j_i}(K)$ for $K \in \K_{i,j}^+$ such that $|{V^j_i}(K)| = w(K)L \1_R(\{V^j_i\}) \cdot \1_{a', b', R}(K)$; we refer to these parts as \emph{subclusters}. Having partitioned each cluster in this manner, for each $K \in \K^+$ we collect together the corresponding $r$ parts $V^{i}_{j}(K)$. One of these parts (taken from the root of $K$) has size $a'w(K) L$, and we relabel this subcluster as $U_1^K$; the remaining $r-1$ parts have size $b' w(K) L$, and we relabel these subclusters as $U_2^K, \dots, U_r^K$. For each $K \in \K^+$ define $m_1^K := a'w(K)L$ and $m_i^K :=b'w(K)L$ for $2 \leq i \leq r$, so that each subcluster $U_i^K$ has size $m_i^K$.

We refer to the cluster from which a subcluster is taken as the \emph{parent cluster} of that subcluster. Moreover, we choose the partition into subclusters in such a way that whenever $U_i^K$ and $U_{j}^{K'}$ are subclusters whose parent clusters form an edge of $R$, the pair $(U_i^K, U_{j}^{K'})$ is $\eps'$-regular in $G'$ with density $d(U_i^K, U_{j}^{K'}) \geq d/2$. This is possible since each subcluster has size at least $a' w(K) L \geq a'L/D!$. Indeed, the Random Slicing Lemma (see e.g.~\cite[Lemma 10]{MS}) states that the described event holds with high probability if we choose the partition of each cluster uniformly at random.

For each $K \in \K^+$ let $G^K$ denote the subgraph of $G'$ induced by $U^K := \bigcup_{i \in [r]} U_i^K$. So $G^K$ is naturally $r$-partite with vertex classes $U_i^K$ for $i \in [r]$. Furthermore, the graphs $G^K$ for $K \in \K^+$ are vertex-disjoint and collectively cover all vertices of $G$ other than those in the sets $V_i^0$ for $i \in [r]$. Over the next three steps of the proof we will remove or delete some vertices from each subcluster $U_i^K$; whenever we do so we continue to write $U_i^K$, $U^K$ and $G^K$ for the restriction of these sets/graphs to the vertices which were not removed or deleted. Note, however, that we do not edit the quantities $m_i^K$, $L$ and $n$ as vertices are removed or deleted.

\medskip \noindent\emph{Step 4: Remove some vertices to make each $G^K$ super-regular.} For each $K \in \K^+$ and $i \in [r]$ we say that a vertex $v \in U_i^K$ is \emph{bad} if $|N_{G'}(v) \cap U_j^K| < (d/2-\eps')m_j^K$ for some $j \neq i$. By our choice of partition of clusters into subclusters, $(U_i^K,U_j^K)$ is an $\eps'$-regular pair in $G'$ with $d(U_i^K,U_j^K) \geq d/2$ for each $j \neq i$, so there are at most $(r-1)\eps'm_i^K$ bad vertices in $U_i^K$. We now remove all bad vertices from $U_i^K$ for each $K \in \K^+$ and $i \in [r]$. 

Let the set $X$ consist of all removed vertices and also the vertices of $V_i^0$ for each $i \in [r]$, so $|X| \leq (r-1) \eps' n + r \eps n \leq r\eps' n$, and the set $X$ and subclusters $U_i^K$ partition $V(G)$. Moreover, since all bad vertices were removed, for each $K \in \K^+$ and each $i \neq j$ the pair $(U_i^K, U_j^K)$ is now $(2\eps', d/3)$-super-regular.

At this point we note that over the next two steps of the proof at most $2\beta m_i^K + C$ vertices will be deleted from each subcluster $U_i^K$, in addition to the at most $(r-1)\eps' m_i^K$ vertices removed during the current step. Since $C \leq \frac{1}{h} \cdot \frac{(1-\eps)n}{\ell} \cdot \frac{1}{D!} \leq a'Lw(K) \leq \eps m_i^K$, this means that in total at most $3\beta m_i^K \leq \frac{d}{12} m_i^K$ vertices are removed or deleted from $U_i^K$, and so even after some or all of these deletions it will remain the case that
\begin{enumerate}
\item[(S1)] If $W_1$ and $W_2$ are subclusters whose parent clusters form an edge of $R$, then $(W_1, W_2)$ is a $2\eps'$-regular pair in $G'$ with density at least $d/3$.
\item[(S2)] For any $K \in \K^+$ and $i \neq j$ the pair $(U_i^K,U_j^K)$ is $(3\eps', d/4)$-super-regular in $G'$.
\end{enumerate}

\medskip \noindent\emph{Step 5: Delete copies of $H$ which cover all vertices of $X$.}
 We now delete at most $|X|+r$ vertex-disjoint copies of $H$ from $G$ so that every vertex of $X$ is deleted, at most $2 \beta m_i^K$ vertices are deleted from any subcluster $U_i^K$, and also, if $h$ divides $rn$, so that the total number of undeleted vertices is divisible by $rh$. This can be done greedily. Indeed, since in total we choose at most $|X| + r \leq 2r \eps' n$ copies of $H$, at most $2r \eps' n h$ vertices are deleted in total.  
 
Prior to any deletion, we `mask' any vertices in any subcluster $U_i^K$ from which at least $\beta m_i^K$ vertices (i.e. at least a $\beta$-proportion of the vertices) have previously been deleted; there are then at most $2r \eps' nh/\beta \leq \beta n$ vertices which lie in masked subclusters. Together with the at most $2r \eps' n h \leq \beta n$ vertices in copies of $H$ already deleted in this step, this means we must choose the next copy of $H$ so as to avoid at most $2\beta n$ vertices of $G$. So the restriction of $G$ to the as-yet-undeleted vertices of $G$ has minimum multipartite degree at least $\frac{r-2}{r-1}n + \alpha n$ (recall from \eqref{eq:defchicr} that $\chicr(H) > \chi(H)-1 = r-1$). We may therefore select any as-yet-undeleted vertex $v$ and apply Proposition~\ref{deletevertex} to obtain a copy of $H$ within this restriction which contains $v$, which we then delete. Whilst $X$ remains non-empty we always choose $v \in X$, which ensures that after at most $|X|$ deletions every vertex of $X$ will have been deleted. 
 
If $h$ does not divide $rn$ we are then done, so suppose now that $h$ divides $rn$. We continue as before, now choosing $v$ at each step to be an arbitrary unmasked vertex. Since each time we delete a copy of $H$ we delete $h$ vertices from $G$, the number of undeleted vertices of $G$ is always divisible by $h$, and so we can ensure that the number of undeleted vertices of $G$ is divisible by $rh$ by deleting at most a further $r-1$ copies of $H$, as claimed. Finally, the fact that masked vertices cannot be deleted ensures that at most $\beta m_i^K + h \leq 2 \beta m_i^K$ vertices are deleted from any subcluster $U_i^K$, as required.

\medskip \noindent \emph{Step 6: Delete vertices or copies of $H$ from $G$ to ensure divisibility of subcluster sizes.}
For (i) of Theorem~\ref{theo:combined}, in which we only wish to find an $H$-tiling covering all but at most $C$ vertices of $G$, we now simply delete vertices of $G$ individually so that, following these deletions, the size of each subcluster is divisible by $rh\cdot\gcd(H)$ (the deleted vertices will not be covered by the $H$-tiling we construct). Since we have $r\ell \leq rM$ clusters, each of which was partitioned into at most $D!$ subclusters, we can achieve this by deleting at most $rMD! \cdot rh\cdot\gcd(H) \leq C$ vertices. These are the only vertices of $G$ which will not be covered by the $H$-tiling we are constructing. 

Now consider (ii), in which we assume that $\gcd(H) = 1$ and that $h$ divides $rn$. By Proposition~\ref{defU}, we may choose an integer $s$ for which the complete $r$-partite graph $\U(H)$ with vertex classes $Y_1, Y_2, \dots, Y_r$ of sizes $|Y_1| = srh+1$, $|Y_2| = \dots = |Y_{r-1}| = srh$ and $|Y_r| = srh-1$ admits a perfect $H$-tiling. Moreover, since $s$ depends only on $H$, and $1/M \ll 1/h$, we may assume that $s \leq M$. We now delete vertex-disjoint copies of $\U(H)$ from $G$ so that, following these deletions, the size of each subcluster is divisible by $rh$ (since $\U(H)$ admits a perfect $H$-tiling, deleting a copy of $\U(H)$ from $G$ is equivalent to deleting $sr^2$ vertex-disjoint copies of $H$ from $G$). We do this by iterating the following steps. 

If every subcluster has size divisible by $rh$, then we are done. Otherwise, since the total number of undeleted vertices is divisible by $rh$, there must be two subclusters $W_1$ and $W_1'$ whose size is not divisible by $rh$. Let $X_1$ and $X_1'$ be the parent clusters of $W_1$ and $W_1'$ respectively. Then by (\ref{eq:mindegR}) and Proposition~\ref{prop:meetingKrs} we may choose clusters $X_2, \dots, X_r$ and $X'_2, \dots, X'_{r-1}$ such that $\{X_1, X_2, \dots, X_r\}$ and $\{X'_1, X'_2 \dots, X'_{r-1}, X_r\}$ each induce copies of $K_r$ in $R$. Arbitrarily choose subclusters $W_2, \dots, W_r$ and $W'_2, \dots, W'_{r-1}$ such that $X_i$ and $X_i'$ are the parent clusters of $W_i$ and $W_i'$ respectively. Now let $z \in [rh-1]$ be such that $|W_1| \equiv z$ modulo $rh$. Greedily choose and delete $z$ vertex-disjoint copies of $\U(H)$ in $G$ in which $Y_i$ is embedded to $W_i$ for each $i \in [r]$. Having done so, greedily choose and delete a further $z$ vertex-disjoint copies of $\U(H)$ in $G$ in which $Y_1$ is embedded to $W_r$, $Y_r$ is embedded to $W'_1$, and $Y_i$ is embedded to $W'_i$ for each $2 \leq i \leq r-1$ (we shall explain shortly why it is possible to choose copies of $\U(H)$ in this way). Then, modulo $rh$, the effect of these deletions is to reduce $|W_1|$ by $z$, to increase $|W'_1|$ by $z$, and to leave the size of all other subclusters unchanged. So $W_1$ now has size divisible by $rh$, and so the number of subclusters whose size is not divisible by $rh$ has been reduced by at least $1$. At this point we proceed to the next round of the iteration.

Since there are at most $r M D!$ subclusters, this process terminates after at most $r M D!$ iterations, at which point each subcluster has size divisible by $rh$. In each iteration we delete fewer than $2rh$ copies of $|\U(H)|$, each of which has $sr^2h \leq Mr^2h$ vertices, so in total at most $r M D! \cdot 2rh \cdot Mr^2h \leq C$ vertices are deleted in this step. 

It remains only to explain why it is always possible to choose copies of $\U(H)$ as desired. To see this, suppose that we have already deleted copies of $\U(H)$ covering up to $C$ vertices of $G$, and that we next wish to choose and delete a copy of $\U(H)$ within subclusters $W_1, \dots, W_r$ whose parent clusters $X_1, \dots, X_r$ form a copy of $K_r$ in $R$. It follows from (S1) that at this point $(W_i, W_j)$ is a $2\eps'$-regular pair in $G'$ of density at least $d/3$ for each $i \neq j$. The fact that $n \geq n_0$ is sufficiently large implies that each subcluster $W_i$ is large enough to apply the Counting Lemma (see, e.g.,~\cite{RS}), which guarantees that a copy of $\U(H)$ can be found in $G'[\bigcup_{i \in [r]} W_i]$, with vertex classes embedded in the desired manner.

Observe that since at most $2 \beta m_i^K$ vertices were deleted from any subcluster $U_i^K$ in Step 5, and at most $C$ vertices were deleted in total in this step, the total number of vertices deleted from any subcluster is at most $2\beta m_i^K + C \leq 3\beta m_i^K$, justifying our assertion at the end of Step 4.

\medskip \noindent \emph{Step 7: Blow-up a perfect $H$-tiling in each $G^K$.} Consider any $K \in \K^+$. Recall that prior to any removals or deletions each subcluster $U_i^K$ had size $m_i^K$, where $m_1^K = a'w(K)L$ and $m_i^K = b'w(K)L$ for $2 \leq i \leq r$.
Since then we have removed or deleted at most $3\beta m_i^K$ vertices (i.e. at most a $3\beta$-proportion) from each $U_i^K$, so in particular (since $b' \geq a'$) each subcluster $U_i^K$ now has size at least $m_1^K - 3 \beta m_1^K$. So if we let $\hat{G}^K$ denote the complete $r$-partite graph whose vertex classes are the subclusters $U_1^K, \dots, U_r^K$, then we now have 
\begin{align*}
\sigma(\hat{G}^K) & = \frac{\min_{i \in [r]} |U_i^K|}{|U^K|} \geq \frac{m_1^K - 3\beta m_1^K}{\sum_{i=1}^{r} m_i^K} \geq  \frac{(1-3\beta) a' w(K)L}{(a' + (r-1)b') w(K)L}  \\ 
&= (1-3\beta) \frac{a'}{h} \geq \frac{a}{h} + \frac{\alpha}{2} - 3\beta \geq \sigma + \frac{\alpha}{3}.
\end{align*}
Also, we now have $|U^K| \geq (1-3\beta) \sum_{i=1}^r m_i^K \geq m_2^K$, so for any $2 \leq i, j \leq k$ we have $||U_i^K| - |U_j^K|| \leq 3 \beta m_2^K \leq 3 \beta |U^K|.$
Since our deletions in Step~6 ensured that $rh\cdot\gcd(H)$ now divides $|U_i^K|$ for each $i \in [r]$, the graph $\hat{G}^K$ satisfies the conditions of Proposition~\ref{completetiling} (with $\alpha/3$, $3 \beta$ and $|U^K|$ in place of $\alpha$, $\beta$ and $n$ respectively, with the smallest subcluster $U_i^K$ in place of $V_1$, and the remaining subclusters in place of $V_2, \dots, V_r$). By this proposition $\hat{G}^K$ contains a perfect $H$-tiling. Since by (S2) each pair $(U_i^K, U_j^K)$ is $(3\eps', d/4)$-super-regular in $G'$, the Blow-up Lemma (Theorem~\ref{theo:blow-up}) implies that there is also a perfect $H$-tiling $M^K$ in $G^K$. Let $M^*$ be the $H$-tiling in $G$ consisting of all the copies of $H$ which were deleted in Steps~5 and~6. Then $M := M^* \cup \bigcup_{K \in \K^+} M^K$ is an $H$-tiling in $G$ which covers all vertices of~$G$ except the at most $C$ vertices deleted individually in Step~6, proving (i). For (ii) recall that in this case no vertices were deleted individually in Step~6, so $M$ is a perfect $H$-tiling in~$G$. \qed

\section{Proof of Theorem~\ref{theo:almosttiling}} \label{sec:deduce}

The proof of Theorem~\ref{theo:almosttiling} is an immediate corollary of Theorem~\ref{theo:combined}. Indeed, fix $0 < \psi \leq 1$, and let $H$ be a graph on $h$ vertices with $\chi(H) = r \geq 3$. Set $k: =\chicr(H)$ and $\alpha : = \tfrac{\psi}{2rkh}$, and take $C$ and $n_0$ large enough to apply Theorem~\ref{theo:combined} and such that $C \leq \alpha n_0$. Consider a balanced $r$-partite graph $G$ on $rn$ vertices with $\delta^*(G)\geq\frac{k-1}{k}\, n$ and $n \geq n_0$.

We construct an auxiliary graph $G'$ from $G$ by adding  the same number $m$ of dummy vertices to each vertex class, where $m := 2k\alpha n \leq n$. We make these dummy vertices adjacent to every other vertex, except vertices in their own vertex class. As a result, $G'$ is a balanced $r$-partite graph on $rn'$ vertices with $n'=n+m$ and 
\begin{align*}
 \delta^*(G')=\delta^*(G)+m & \ge \frac{k-1}{k}n+ m = \frac{k-1}{k}(n+m) + \frac{m}{k} \\ &= \frac{k-1}{k}n' + 2 \alpha n \ge \left(\frac{k-1}{k}+\alpha\right) n'. 
\end{align*}
So we may apply Theorem~\ref{theo:combined}(i) to $G'$ to obtain an $H$-tiling of $G'$ which covers all but at most $C$ vertices of $G'$. There are at most $rm$ copies of $H$ in this tiling that contain a dummy vertex. We remove these copies of $H$ to obtain an $H$-tiling of $G$ that covers all but at most $rm(h-1) + C \leq 2rk\alpha (h-1) n + \alpha n \leq \psi n$ vertices of~$G$.
\qed

\section{Lower bound constructions}
\label{sec:construct}

In this section we present simple constructions which show that the minimum degree condition of Theorem~\ref{theo:degalpha} is best-possible up to the error term. These are all variations of the following construction.

\begin{construct}\label{construct:general}
Let $r$, $n$ and $n_{ij}$ for $i, j \in [r]$ be positive integers with \linebreak $\sum_{j \in [r]} n_{ij} = n$ for each $i \in [r]$. Choose pairwise-disjoint sets $V_i^j$ with $|V_i^j| = n_{ij}$ for each $i,j \in [r]$. Let $G = G((n_{ij}), r)$ be the graph with vertex set $\bigcup_{i, j \in [r]} V_i^j$ and in which the pairs $(V_i^j,V_{i'}^{j'})$ induce complete bipartite graphs whenever both $i\neq i'$ and $j\neq j'$ (and no other edges exist). We refer to the sets $V_i^j$ as \emph{blocks}, to the sets $V_i := \bigcup_{j \in [r]} V_i^j$ as \emph{columns} and to the sets $V^j := \bigcup_{i \in [r]} V_i^j$ as \emph{rows}. So each vertex is adjacent to every other vertex which is not in the same row or column. Moreover we view $G$ as a balanced $r$-partite graph whose vertex classes are the columns $V_i$ for $i \in [r]$, so each vertex class $V_i$ has size $|V_i| = \sum_{j \in [r]} n_{ij} = n$. Observe that we then have $\delta^*(G) = n - \max_{i, j \in [r]} n_{ij}$.
\end{construct}

Consider any graph $H$ on $h$ vertices with $\chi(H) = r \geq 3$. Since each row of $G= G((n_{ij}), r)$ induces an independent set in $G$, each copy $H'$ of $H$ in $G$ inherits an $r$-colouring from $G$ with colour classes $V(H') \cap V^j$ for $j \in [r]$. It follows from this that $H'$ has at least $\sigma(H)h$ vertices in each row $V^j$ of $G$, and that $|V(H') \cap V^j| - |V(H') \cap V^{j'}|$ is divisible by $\gcd(H)$ for any $j, j' \in [r]$.

Suppose first that $\gcd(H) > 1$, and fix any integer $n$. If $r$ divides $n$ then set $n_{11} = n/r + 1$, $n_{13} = n/r-1$ and $n_{ij} = n/r$ for each other pair $i, j \in [r]$, and note that we then have $|V^1| - |V^2| = 1$. Otherwise, set each $n_{ij}$ to be equal to either $\lfloor n/r \rfloor$ or $\lceil n/r \rceil$ in such a way that $\sum_{j \in [r]} n_{ij} = n$ for each $i \in [r]$ but $\sum_{i \in [r]} n_{i1} - \sum_{i \in [r]} n_{i2} = 1$; the latter implies that  $|V^1| - |V^2| = 1$. In either case we have $\delta^*(G) \geq n - \frac{n}{r} - 1 = (1 - \tfrac{1}{\chi^*(H)})n-1$ but $G$ has no perfect $H$-tiling. To see this, let $\T$ be an $H$-tiling in $G$. We observed above that $\gcd(H)$ divides $|V(H') \cap V^1| - |V(H') \cap V^2|$ for any $H' \in \T$. It follows that $\gcd(H)$ also divides $|V(\T) \cap V^1| - |V(\T) \cap V^2|$; since $|V^1| - |V^2| = 1$ and $\gcd(H) > 1$ this implies that $\T$ is not perfect. This shows that Theorem~\ref{theo:degalpha} is best-possible up to the $\alpha n$ error term for any $H$ with $\gcd(H) > 1$ and any $n$.

Now suppose instead that $\gcd(H) = 1$, and fix any integer $n$. For each $i \in [r]$ set $n_{i1} := \lceil \sigma(H) n \rceil-1$ and take $n_{i2}, \dots, n_{ir}$ to be as equal as possible with $\sum_{j = 1}^n n_j = n$. Then we have
\begin{align*}
\delta^*(G) &=n-\left\lceil \frac{n-\lceil \sigma(H) n\rceil+1}{r-1}\right\rceil \geq n - \frac{n-\sigma(H) n}{r-1} - 1
\\ & = \left(1 - \frac{1-\sigma(H)}{r-1} \right) n - 1
= \left(1-\frac{1}{\chi^*(H)}\right)n - 1.
\end{align*}
However we observed above that any copy of $H$ in $G$ has at least $\sigma h$ vertices in the row $V^1$, so any $H$-tiling in $G$ has size at most 
$$ \frac{|V^1|}{\sigma(H) h}  = \frac{r (\lceil \sigma(H) n \rceil-1)}{\sigma(H) h} < \frac{rn}{h},$$ 
so it is not perfect. This shows that Theorem~\ref{theo:degalpha} is best-possible up to the $\alpha n$ error term for any $H$ with $\gcd(H) = 1$ and any $n$.

\section{Concluding remarks}~

\medskip \noindent {\bf Comparison to non-partite results:} We note that Theorem~\ref{theo:degalpha} is strictly stronger than the analogous result in the non-partite setting. Indeed, let $H$ be a graph on $h$ vertices with $\chi(H) = r \geq 3$, and let $G$ be a balanced $r$-partite graph on $rn$ vertices with $\delta(G) \geq (1-1/\chi^*(H) + \alpha) n$, where $n$ is large and $h$ divides $rn$. We may arbitrarily delete at most $r$ copies of $H$ from $G$ so that the number of remaining vertices of $G$ is divisible by $r$, following which we partition the remaining vertices of $G$ into $r$ vertex classes of equal size uniformly at random. A standard probabilistic argument shows that with high probability we then have $\delta^*(G) \geq (1-1/\chi^*(H)) + \alpha n/2$, whereupon we may apply Theorem~\ref{theo:degalpha} to obtain a perfect $H$-tiling in $G$. 

On the other hand, the (non-partite) minimum degree of $G$ as in Theorem~\ref{theo:degalpha} may be as low as $(r-1)(1-1/\chi^*(H)) rn < (1-1/\chi^*(H)) rn$, which is too small for us to apply the analogous non-partite result. Similar comments apply to Theorem~\ref{theo:almosttiling}.

\medskip \noindent {\bf The case where $\chi(H) \neq r$:} In a similar manner, one can extend Theorem~\ref{theo:degalpha} to the case where $G$ has more vertex classes than $H$. Indeed, let $H$ be a graph on $h$ vertices with $\chi(H) = r \geq 3$, and let $G$ be a balanced $k$-partite graph on $kn$ vertices with $\delta(G) \geq (1-1/\chi^*(H) + \alpha) n$, where $n$ is large and divisible by $k$. If $k < r$ then $G$ does not contain even a single copy of $H$, whilst the case $k=r$ is dealt with by Theorem~\ref{theo:degalpha}. If instead $k > r$, then we first delete a small number of copies of $H$ in $G$ similarly as above, which allows us to assume that $n$ is divisible by $r$.  We then partition each vertex class $V_i$ of $G$ uniformly at random into $r$ parts $V_i^1, \dots, V_i^r$ each of size $n/r$. We then arrange these parts into $k$ vertex-disjoint balanced $r$-partite graphs $G_1, \dots, G_k$, where $V(G_\ell) = \bigcup_{j \in [r]} V_{\ell+j}^{j}$ (with addition taken modulo $k$). So each $G_\ell$ has $n$ vertices in total. Again a standard probabilistic argument shows that with high probability each $G_\ell$ has $\delta^*(G_\ell) \geq (1-1/\chi^*(H) + \alpha/2) \tfrac{n}{r}$. Theorem~\ref{theo:degalpha} then yields a perfect $H$-tiling in each $G_\ell$, and together these tilings form a perfect $H$-tiling in $G$.

\end{document}